\numberwithin{equation}{section}
\numberwithin{figure}{section}
\theoremstyle{plain}
\newtheorem{thm}{\protect\theoremname}
  \theoremstyle{definition}
  \newtheorem{defn}[thm]{\protect\definitionname}
  \theoremstyle{remark}
  \newtheorem{rem}[thm]{\protect\remarkname}
  \theoremstyle{plain}
  \newtheorem{prop}[thm]{\protect\propositionname}
  \providecommand{\definitionname}{Definition}
  \providecommand{\propositionname}{Proposition}
  \providecommand{\remarkname}{Remark}
\providecommand{\theoremname}{Theorem}
\begin{document}
\global\long\def\Re{\textrm{Re}}

\global\long\def\Im{\textrm{Im}}

\global\long\def\bbP{\mathbb{P}}

\global\long\def\O{\mathrm{\mathtt{\mathsf{\mathbf{\mathrm{O}}}}}}

\global\long\def\o{\mathrm{\mathtt{\mathrm{o}}}}

\global\long\def\DD{\mathcal{D}}

\global\long\def\deq{\overset{d}{=}}

\global\long\def\bbN{\mathbb{N}}

\global\long\def\bbQ{\mathbb{Q}}

\global\long\def\bbR{\mathbb{R}}

\global\long\def\bbZ{\mathbb{Z}}

\global\long\def\bbC{\mathbb{C}}

\global\long\def\eset{\emptyset}

\global\long\def\nto{\nrightarrow}

\global\long\def\re{\mathrm{Re\,}}

\global\long\def\im{\mathrm{Im\,}}

\global\long\def\calB{\mathcal{B}}

\global\long\def\limti{{\displaystyle \lim_{n\to\infty}}}

\global\long\def\sumnti{{\displaystyle \sum_{n=1}^{\infty}}}

\global\long\def\sumktn{{\displaystyle \sum_{k=1}^{n}}}

\global\long\def\E{{\bf E}}

\global\long\def\ind{{\bf 1}}

\global\long\def\One{{\bf 1 }}

\global\long\def\ZZ{\mathbb{Z}}

\global\long\def\NN{\mathbb{N}}

\global\long\def\RR{\mathbb{R}}

\global\long\def\PP{\mathbb{P}}

\global\long\def\BB{\mathcal{B}}

\global\long\def\O{\mathrm{\mathtt{\mathsf{\mathbf{\mathrm{O}}}}}}

\global\long\def\o{\mathrm{\mathtt{\mathrm{o}}}}

\global\long\def\DD{\mathcal{D}}

\global\long\def\deq{\overset{d}{=}}

\title{Weak invariance principle for the local times of Gibbs-Markov processes}

\author{Michael Bromberg\\
School of Mathematical Sciences, Tel Aviv University. Tel Aviv 69978,
Israel.}
\begin{abstract}
The subject of this paper is to prove a functional weak invariance
principle for the local time of a process generated by a Gibbs-Markov
map. More precisely, let $\left(X,\calB,m,T,\alpha\right)$ is a mixing,
probability preserving Gibbs-Markov{\normalsize{}. and let $\varphi\in L^{2}\left(m\right)$
be an aperiodic function with mean $0$. Set $S_{n}=\sum_{k=0}^{n}X_{k}$
and define the hitting time process $L_{n}\left(x\right)$ be the
number of times $S_{k}$ hits $x\in\ZZ$ up to step $n.$ The normalized
local time process $l_{n}\left(x\right)$ is defined by 
\[
l_{n}\left(t\right)=\frac{L_{n}\left(\left\lfloor \sqrt{n}x\right\rfloor \right)}{\sqrt{n}},\,\, x\in\RR.
\]
 We prove under that $l_{n}\left(x\right)$ converges in distribution
to the local time of the Brownian Motion. The proof also applies to
the more classical setting of local times derived from a subshift
of finite type endowed with a Gibbs measure. }{\normalsize \par}
\end{abstract}
\maketitle

\section{Introduction}

Let $\left(X,\calB,m,T,\alpha\right)$ be a mixing, probability preserving
Gibbs-Markov map on a standard probability space. Let $\varphi\in L^{2}\left(m\right)$
be an integer valued function with mean $0$. We assume that $\varphi$
is a uniformly Lipchitz continuous function on the partition $\beta=T\alpha$,
i.e. $D_{\beta}f:=\sup_{a\in\beta}D_{a}f<\infty$, where $D_{a}f=\sup_{x,y\in a}\frac{\left|f\left(x\right)-f\left(y\right)\right|}{d\left(x,y\right)}$
is the Lipchitz norm on $a$ and $d\left(\cdot,\cdot\right)$ is the
complete metric on $X$.

In what follows, convergence in distribution of random variables $X_{n}$
taking values in some standard probability space $\Omega$ to a limit
$X$, means that for every bounded and continuous $f:\Omega\rightarrow\bbR$,
$E\left(f\left(X_{n}\right)\right)\rightarrow E\left(f\left(X\right)\right)$,
where $E\left(\cdot\right)$ denotes expectation. In this case, we
write $X_{n}\overset{d}{\longrightarrow}X$. 

Let $S_{n}\left(x\right):=\sum_{k=0}^{n-1}\varphi\left(T^{k}\left(x\right)\right)$
and $\omega_{n}\left(t\right)=\frac{S_{\left[nt\right]}}{\sqrt{n}}$,
where $\left[x\right]$ is the integral value of $x$, $t\in\left[0,1\right]$.
The central limit theorem for $S_{n}$ states that $\frac{S_{n}}{\sqrt{n}}$
converges in distibution to the Gaussian distribution $\mathcal{N}\left(0,\sigma^{2}\right)$,
where $\sigma^{2}=\lim_{n\rightarrow\infty}\frac{Var\left(S_{n}^{2}\right)}{n}$
is the asymptotic variance of $S_{n}$. The stronger, functional CLT
states that the random functions $\omega_{n}\left(\cdot\right)$ converge
in distribution to $\omega\left(\cdot\right)$, where $\omega\left(\cdot\right)$
is the Brownian motion sastisfying $E\left(\omega\left(t\right)\right)=0$,
$Var\left(\omega\left(t\right)\right)=\sigma^{2}t$. Here, convergence
in distribution is of random variables taking values on the Skorokhod
space $D\left[0,1\right]$ of fucntions on $\left[0,1\right]$ that
are continuous from the right with finite limits on the left (cadlag
functions). 

We wish to establish a distributional invariance principle for the
local time of the sequence $\omega_{n}$. To make this precise, define
the occupation times of a function $f\in D\left[0,1\right]$ by 
\[
\nu_{f}\left(A\right)=\int_{0}^{1}\ind_{A}\left(f\left(t\right)\right)dt,\,\, A\in B\left(\bbR\right).
\]

Recall that the occupation measure of the Brownian motion is almost
surely, absolutely contiuous with respect to the Lebesgue measure
on $\bbR$ \cite{MP}. The (random) density function with respect
to the occupation measure, which we denote by $l\left(\cdot\right)$
is the local time of the Brownian motion. 

We define the local time of $\omega_{n}$ at the point $x$ by 
\[
l_{n}\left(x\right)=\frac{\#\left\{ 0\leq k\leq n:\, S_{k}=\left[\sqrt{n}x\right]\right\} }{\sqrt{n}}.
\]

$l_{n}$ is the normailized number of visits to the point $\left[\sqrt{n}x\right]$
by the process $\left\{ S_{k}\right\} $ up to time $n$. It may be
roughly viewed as the density function of the atomic occupation measure
$\omega_{n}$. In fact (as section ... shows) the differences $\nu_{\omega_{n}}\left[a,b\right]-\int_{\bbR}l_{n}\left(x\right)dx$
converge in distribution to $0$. 

Existence of local time for the Brownian motion ensures that $\nu_{\omega_{n}}\left(A\right)\overset{d}{\longrightarrow}\nu_{\omega}\left(A\right)$
for every $A\in\mathcal{B}\left(\bbR\right)$ with boundary of Lebesgue
measure $0$. We wish to establish the convergence in distribution
of the corresponding local times. 

Since the local time of the Brownian motion is an a.s continuous function,
we may consider $l_{n}$ and $l$ as a family of random variables
taking values in the space $D$ of cadlag functions on $\bbR$ (see
\cite{Bil}). 
\begin{thm}
\label{thm:Main Theorem}Let $\varphi\in L^{2}\left(m\right)$ with
$\sup_{a\in\beta}D_{a}\varphi<\infty$, $m\left(\varphi\right)=0$.
If $\varphi$ is aperiodic (see definition \ref{def: Aperiodicity})
, then $l_{n}\left(\cdot\right)\overset{d}{\longrightarrow}l\left(\cdot\right)$. 
\end{thm}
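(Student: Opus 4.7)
The plan is the classical two-step program: convergence of finite-dimensional distributions of $l_{n}$ to those of $l$, followed by tightness in $D\left(\RR\right)$. The essential analytic input is the spectral theory of the transfer operator of $\left(X,m,T\right)$ and of its twisted perturbations $\hat{T}_{\theta}f:=\hat{T}\bigl(e^{i\theta\varphi}f\bigr)$. Under the aperiodicity and uniform Lipschitz hypotheses on $\varphi$, these operators admit a spectral gap on the space of Lipschitz observables, and a Nagaev--Guivarc'h perturbation argument produces a local limit theorem (LLT)
\[
\sup_{k\in\ZZ}\left|\sqrt{n}\,m\bigl(S_{n}=k\bigr)-\frac{1}{\sigma\sqrt{2\pi}}\,e^{-k^{2}/(2\sigma^{2}n)}\right|\longrightarrow 0,
\]
together with the multivariate refinement giving the asymptotics of joint hitting probabilities $m\bigl(S_{k_{1}}=a_{1},\ldots,S_{k_{q}}=a_{q}\bigr)$ for increasing times $k_{1}<\cdots<k_{q}$ whose gaps are of order $n$.

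With the LLT in hand, I would establish convergence of finite-dimensional distributions by the method of moments. For fixed $x_{1},\ldots,x_{p}\in\RR$ and exponents $q_{1},\ldots,q_{p}$, expanding $\E\bigl(\prod_{j}l_{n}(x_{j})^{q_{j}}\bigr)$ via the definition of $l_{n}$ produces a $q$-fold sum (where $q=\sum q_{j}$) of joint hitting probabilities of the chain $\{S_{k}\}$ at the scaled lattice points $\lfloor\sqrt{n}\,x_{j}\rfloor$. Substituting the joint LLT and identifying the resulting expression as a Riemann sum yields the limit $\E\bigl(\prod_{j}l(x_{j})^{q_{j}}\bigr)$, the corresponding moment of the Brownian local time. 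Since $l\left(\cdot\right)$ has sub-Gaussian tails at each point and is thus determined by its moments, this gives convergence of all finite-dimensional marginals. The approximation $\nu_{\omega_{n}}[a,b]-\int_{a}^{b}l_{n}(x)\,dx\to 0$ mentioned in the introduction provides a useful consistency check and links $l_{n}$ to the occupation measures coming from the functional CLT.

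For tightness in $D(\RR)$, since the limit $l(\cdot)$ is almost surely continuous, it suffices to verify a Kolmogorov-type increment bound together with a tail bound, both uniform in $n$:
\[
\E\bigl(\left|l_{n}(x)-l_{n}(y)\right|^{2p}\bigr)\le C\,|x-y|^{p},\qquad \E\bigl(l_{n}(x)^{2p}\bigr)\le C\,e^{-cx^{2}},
\]
for some integer $p\ge 2$. The tail estimate follows from the Gaussian decay in the LLT. The increment estimate is the main obstacle: in the i.i.d.\ random-walk case it is a classical estimate going back to Kesten, but in the Gibbs-Markov setting the successive visits to a given point are no longer conditionally independent, and one must control the error terms in the joint LLT uniformly in all spatial arguments $a_{1},\ldots,a_{2p}$ lying between $\lfloor\sqrt{n}\,y\rfloor$ and $\lfloor\sqrt{n}\,x\rfloor$. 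The uniformity is provided by the Lipschitz-norm bounds on the iterated perturbed operators $\hat{T}_{\theta}^{k}$ coming from the spectral gap, but the combinatorial bookkeeping in the $2p$-fold sum, in particular the decomposition according to the gap structure $k_{j+1}-k_{j}$ and the treatment of very close visit times, is the delicate portion of the proof.
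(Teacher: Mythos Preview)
Your two-step program (tightness plus identification of the limit) matches the paper's architecture, and your treatment of tightness is close to what the paper does: a sixth-moment increment estimate for $L_{n}(x)-L_{n}(y)$ is obtained via Fourier inversion and the spectral decomposition of the perturbed transfer operators, with the key analytic ingredient being a potential-kernel bound $\sum_{n\ge 1}|m(S_{n}=x)-m(S_{n}=y)|\le C|x-y|$. The paper's bound comes with logarithmic correction terms and is only stated for $|x-y|\ge 1/\sqrt{n}$ (harmless, since $l_{n}$ is piecewise constant at scale $1/\sqrt{n}$), so your clean $|x-y|^{p}$ is slightly optimistic but morally the same.

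Where you diverge is in the identification step. You propose proving convergence of finite-dimensional distributions of $l_{n}$ by the method of moments and a multivariate LLT. The paper avoids this entirely. Instead, after tightness it passes to a subsequential limit $q$ and identifies $q\overset{d}{=}l$ through the functionals $\pi_{g}(f)=\bigl(\int_{a_{1}}^{b_{1}}f,\ldots,\int_{a_{k}}^{b_{k}}f\bigr)$, which form a separating $\pi$-system on $D$. The point is that $\int_{a}^{b}l_{n}(x)\,dx$ differs from the occupation time $\int_{0}^{1}\ind_{[a,b)}(\omega_{n}(t))\,dt$ by an asymptotically negligible boundary term, and the latter converges to the Brownian occupation time by the functional CLT $\omega_{n}\overset{d}{\to}\omega$ together with almost-sure continuity of the occupation-time map (Kesten--Spitzer). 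So the identification is essentially free once tightness and the functional CLT are in hand; no mixed moments of $l_{n}(x_{1}),\ldots,l_{n}(x_{p})$ need to be computed. Your route would work too, and is closer to Borodin's original treatment of the i.i.d.\ case, but it duplicates effort: the combinatorics you flag as delicate in the increment estimate would have to be redone, in a different guise, for the mixed moments.
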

To prove the theorem, we prove tightness of the sequence $l_{n}$
in section \ref{sec:Tightness} and then identify $l$ as the only
possible limit point for $l_{n}$ in section \ref{Sec:Identifying the Only Possible Limit Point}.

\section{Characteristic Function Operators}

Throughout this section, let $\left(X,\calB,m,T,\alpha\right)$ be
a mixing, probability preserving Gibbs-Markov map. For a measurable
partition $\beta$ of $X$, denote by $L_{p,\beta}$ the intersection
of $L^{p}$ with the space of all functions with a finite Lipchitz
norm, i.e. $L_{p,\beta}=\left\{ f\in L^{p}\left(m\right)|\, D_{\beta}f<\infty\right\} $
(see introduction for the definition of the Lipchitz norm). Throughout
the rest of this section $\beta$ denotes the partition $T\alpha$. 

Consider $T$ as an operator on $L^{\infty}\left(m\right)$ defined
by $Tf=f\circ T$. Then the transfer operator $\hat{T}:L^{1}\left(m\right)\rightarrow L^{1}\left(m\right)$
is the pre-dual of $T$, uniquely defined by the equation 
\[
\int f\cdot g\circ T\, d\mu=\int\hat{T}f\cdot g\, d\mu\,\,\forall f\in L^{1},g\in L^{\infty}.
\]
Recall that an operator $S$ on a Banach space $B$ is called quasi-compact
if there exist $S$-invariant closed subspaces $F,H$ such that:
\begin{enumerate}
\item $F$ is finite dimensional and $B=F\oplus H$. 
\item $T$ is diagonizable when restricted to $F$ with all eigenvalues
having modulus equal to the spectral radius of $T$, denoted by $\rho\left(T\right)$.
\item When restricted to $H$, the spectral radius of $T$ is strictly less
than $\rho\left(T\right)$.\end{enumerate}
\begin{thm}
\cite{AD} $\hat{T}$ is a quasi-compact operator on the space $L:=L_{\infty,\beta}$.
Moreover, $\hat{T}f=m\left(f\right)+Qf$, where $m\left(f\right)$
is interpreted as a constant function on $X$ and $\rho\left(Q\right)<1$. 
\end{thm}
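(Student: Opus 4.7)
The plan is to apply the Ionescu-Tulcea-Marinescu (Hennion) quasi-compactness criterion to $\hat T$ on $L := L_{\infty,\beta}$, equipped with the norm $\|f\|_L := \|f\|_\infty + D_\beta f$, viewed as continuously embedded in the auxiliary space $L^\infty(m)$. The criterion requires two ingredients: a Doeblin-Fortet / Lasota-Yorke inequality
\[
\|\hat T^n f\|_L \le C_1 r^n \|f\|_L + C_2 \|f\|_\infty, \qquad n \ge 1,
\]
for some fixed $r < 1$, together with the fact that $\hat T$ maps the unit ball of $L$ into an $L^\infty$-precompact set. These two properties yield quasi-compactness on $L$ with $\rho(\hat T) = 1$.

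For the Lasota-Yorke inequality I would work from the explicit formula
\[
\hat T^n f(x) = \sum_{a \in \alpha_n,\ T^n a \ni x} g_a(x) f(v_a(x)),
\]
where $\alpha_n = \bigvee_{k=0}^{n-1} T^{-k}\alpha$, $v_a : T^n a \to a$ is the inverse branch and $g_a$ its Jacobian with respect to $m$. The Gibbs-Markov hypotheses---bounded distortion of the one-step Jacobian on each $a \in \alpha$ and the big image property $\inf_{a \in \alpha} m(Ta) > 0$---produce after iteration a uniform distortion bound $D_{T^n a}(\log g_a) \le M$ and a geometric contraction $d(v_a(x), v_a(y)) \le r^n d(x,y)$ on each $T^n a$, with $r, M$ independent of $n$ and $a$. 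Splitting $f(v_a(x)) - f(v_a(y))$ and $g_a(x) - g_a(y)$ separately, bounding via $D_\beta f$ and $\|f\|_\infty$ respectively, and summing using $\sum_a g_a \equiv 1$ then gives the desired inequality. Compactness of $\hat T(\text{ball})$ in $L^\infty$ reduces, via the explicit formula plus Arzel\`a-Ascoli on each $\beta$-atom, to a standard tightness argument controlling the tail $\sum_{m(T^n a) < \varepsilon} g_a(x)$.

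With quasi-compactness in hand, the peripheral spectrum is identified using mixing. Since $m \circ \hat T = m$ and $\hat T 1 = 1$, the constant function is an eigenvector for the eigenvalue $1$ and $f \mapsto m(f)$ is the corresponding spectral projection. Any other peripheral eigenvalue $\lambda$ with eigenfunction $f \in L$ would satisfy
\[
\int f \cdot (g \circ T^n)\, dm = \lambda^n \int f g\, dm \qquad \forall g \in L^\infty;
\]
mixing of $(T, m)$ forces the left-hand side to converge to $m(f) m(g)$, hence $\lambda = 1$ and $f$ is constant $m$-a.e. Thus the peripheral spectrum consists of the simple eigenvalue $1$, and setting $Q := \hat T - m(\cdot)$ gives $\hat T f = m(f) + Q f$ with $\rho(Q) < 1$.

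The main obstacle is the Lasota-Yorke inequality, specifically showing that the iterated distortion constants stay uniformly bounded in $n$ and that inverse branches contract with a geometric rate uniform in the branch; both estimates are the technical heart of the Aaronson-Denker framework and rely crucially on the uniform Gibbs-Markov Lipschitz and big-image hypotheses. Once these are established the Hennion machinery and the mixing-based peripheral spectrum argument are essentially automatic.
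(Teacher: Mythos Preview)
The paper does not give a proof of this theorem; it is quoted as a known result from Aaronson--Denker \cite{AD} and used as a black box. There is therefore no in-paper argument to compare against.

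Your outline is the standard route taken in the cited reference and in \cite{HeH}: establish a Doeblin--Fortet/Lasota--Yorke inequality for $\hat T$ on $(L,\|\cdot\|_L)$ relative to a weaker norm, invoke the Ionescu-Tulcea--Marinescu/Hennion criterion to obtain quasi-compactness with essential spectral radius below $1$, and then use mixing of $(T,m)$ to rule out any peripheral eigenvalue other than the simple eigenvalue $1$. The ingredients you single out---uniform contraction of inverse branches, bounded distortion of the iterated Jacobians, and the big-image property---are exactly the Gibbs--Markov hypotheses that make the Lasota--Yorke estimate go through, and the peripheral-spectrum argument via $\int f\,(g\circ T^n)\,dm=\lambda^n\int fg\,dm$ is the correct way to exploit mixing. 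So your proposal is faithful to the actual proof in the literature; there is no alternative approach in the present paper to contrast it with.
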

For a measurable function $\varphi:X\rightarrow\bbR$, the characteristic
function operators $P_{t}:L^{1}\left(m\right)\rightarrow L^{1}\left(m\right)$,
$t\in\bbR$ are defined by 
\begin{equation}
P_{t}f=\hat{T}\left(e^{it\varphi}f\right).\label{eq:Characteristic function operator}
\end{equation}
If $\varphi\in L_{2,\beta}$ then $P_{t}$ is a twice continuously
differentiable function of $t$. 

Restricting $P_{t}$ to act on $L$ and using the implicit function
theorem (see \cite{HeH}) together with the quasi-compactness of $\hat{T}$
on $L$, we may obtain the Taylor's expansion of the operator $P_{t}$
near $0$. In case $m\left(\varphi\right)=0$ (as we assume for our
purposes), in a sufficiently small neighborhood of $0$, $P_{t}:L\rightarrow L$
is of the form 
\[
P_{t}=\lambda_{t}\pi_{t}+N_{t}
\]
where $\lambda_{t}$ is an eigenvalue with absolute value not exceeding
$1$, $\pi_{t}$ is a projection onto a one dimensional vector space
generated by an eigenfunction $v_{t}$ and $\rho\left(N_{t}\right)<q<1$
for some constant $q$. Moreover, $v_{t}$, $\pi_{t}$, $\lambda_{t}$
are twice continuously differentiable functions of $t$ and the Talyor's
expansions for $\lambda_{t}$ and $\pi_{_{t}}$ are given by 
\begin{eqnarray}
\lambda_{t} & = & 1-\sigma t^{2}+\o\left(t^{2}\right)\nonumber \\
\pi_{t} & = & m+\eta_{t}\label{eq: Taylor's decomposition for characteristic fn op}\\
v_{t} & = & \ind+\O\left(t\right)
\end{eqnarray}
where $\left\Vert \eta_{t}\right\Vert =\O\left(t\right)$.

Dividing the eigenfunctions $v_{t}$ by $m\left(v_{t}\right)$ which
do not vanish in a neighborhood of $t$ (and multiplying $\pi_{t}$
by the same value) we may assume that $m\left(v_{t}\right)=1$. 

We also need the fact that $v'_{0}$ is a purely imaginary function.
To see this note that the equality 
\[
P_{t}v_{t}=\lambda_{t}v_{t}
\]
implies
\begin{eqnarray*}
P'_{0}v_{0}+P_{0}v'_{0} & = & \lambda'_{0}v_{0}+\lambda_{0}v'_{0}\\
 & = & v'_{0}
\end{eqnarray*}
Since $v_{0}=\ind$ we obtain 
\[
P'_{0}\ind=\left(I-P_{0}\right)v'_{0}
\]

Now, $P'_{0}\left(\ind\right)=\hat{T}\left(i\varphi\right)$ is purely
imaginary, since $\hat{T}f$ is real if $f$ is real. Moreover, $m\left(\varphi\right)=0$
implies $m\left(P'_{0}\ind\right)=0$. By corollary 3.6 in \cite{HeH},
the equation $P'_{0}\ind=\left(I-\hat{T}\right)f$, $m\left(f\right)=0$
has a unique solution. Since $v'_{0}$ is the solution to this equation
($m\left(v_{t}\right)\equiv1\implies m\left(v'_{t}\right)\equiv0$)
it follows that $v'_{0}$ is purely imaginary. 

In what follows, we restrict $P_{t}$ to act on $L$. 
\begin{defn}
\label{def: Aperiodicity}A measurable function $\varphi:X\rightarrow\bbZ$
is aperiodic if there is no non-trivial character $\gamma\in\hat{\bbZ}$,
such that $\gamma\circ\varphi$ is $T$-cohomologous to a constant,
i.e. the only solution to the equation 
\[
e^{it\varphi}=\lambda\frac{f\circ T}{f}
\]
with $f:X\rightarrow\mathbb{T}$ measurable, is $t\in2\pi\bbZ$, $f\equiv1$,
$\lambda=1$, $f\equiv1$. $\varphi$ is periodic if it is not aperiodic. \end{defn}
\begin{rem}
\label{Spectral radius of characteristic function operator}If $\varphi$
is aperiodic then the characteristic function operator $P_{t}$ defined
by \ref{eq:Characteristic function operator} has spectral radius
strictly less than $1$ for all $t\notin2\pi\bbZ$. By continuity
of $P_{t}$, this implies that in every compact set $K\subseteq\bbR\setminus2\pi\bbZ$,
there exists a constant $q_{K}<1$, such that $\left\Vert P_{t}^{n}\right\Vert \leq q_{K}^{n}$
for all sufficiently large $n$. 
\end{rem}

\section{Probability Estimates}

Throughout this section we assume that the conditions of theorem \ref{thm:Main Theorem}
hold (hence, all results of the previous section also hold). 
\begin{prop}
\label{prop: Local limi prop}There exists a constant $C$ such that
for any $x\in\bbZ$, $\sqrt{n}\cdot m\left(S_{n}=x\right)<C$.\end{prop}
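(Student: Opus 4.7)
The plan is to combine Fourier inversion on $\bbZ$ with the spectral decomposition of the characteristic function operator $P_{t}$ furnished by Section 2. Since $\varphi$ is integer valued, $S_{n}\in\bbZ$ and the atomic Fourier inversion formula gives
\[
m(S_{n}=x)=\frac{1}{2\pi}\int_{-\pi}^{\pi}e^{-itx}E\bigl(e^{itS_{n}}\bigr)\,dt.
\]
A short induction based on the duality $\int f\cdot g\circ T\,dm=\int\hat{T}f\cdot g\,dm$ rewrites the characteristic function as $E(e^{itS_{n}})=\int P_{t}^{n}\ind\,dm$, so it will suffice to produce the $x$-independent bound $\int_{-\pi}^{\pi}\bigl|\int P_{t}^{n}\ind\,dm\bigr|\,dt=\O(1/\sqrt{n})$.

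Next I would split the domain as $[-\pi,\pi]=[-\epsilon,\epsilon]\cup K_{\epsilon}$ with $K_{\epsilon}=\{\epsilon\leq|t|\leq\pi\}$ and $\epsilon>0$ small enough for the decomposition $P_{t}=\lambda_{t}\pi_{t}+N_{t}$ of \eqref{eq: Taylor's decomposition for characteristic fn op} to be valid on $[-\epsilon,\epsilon]$. On the compact set $K_{\epsilon}$, which is disjoint from $2\pi\bbZ$, aperiodicity together with Remark \ref{Spectral radius of characteristic function operator} supplies a constant $q_{\epsilon}<1$ with $\|P_{t}^{n}\|\leq q_{\epsilon}^{n}$ for all $n$ sufficiently large, so the contribution of $K_{\epsilon}$ to the integral is $\O(q_{\epsilon}^{n})$, negligible compared with $1/\sqrt{n}$.

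On $[-\epsilon,\epsilon]$ I would split further as $P_{t}^{n}\ind=\lambda_{t}^{n}\pi_{t}\ind+N_{t}^{n}\ind$. The $N_{t}^{n}$ term is again exponentially small since $\rho(N_{t})<q<1$ uniformly for $t$ near $0$. For the dominant term, the expansion $\lambda_{t}=1-\sigma t^{2}+\o(t^{2})$ with $\sigma>0$ (a consequence of aperiodicity and the CLT) allows one to shrink $\epsilon$ so that $|\lambda_{t}|\leq e^{-\sigma t^{2}/2}$ throughout $[-\epsilon,\epsilon]$; combined with the uniform bound $|\int\pi_{t}\ind\,dm|=1+\O(t)=\O(1)$ coming from $\pi_{t}=m+\eta_{t}$, this yields the Gaussian estimate
\[
\int_{-\epsilon}^{\epsilon}|\lambda_{t}|^{n}\bigl|\textstyle\int\pi_{t}\ind\,dm\bigr|\,dt\leq C\int_{-\infty}^{\infty}e^{-n\sigma t^{2}/2}\,dt=\O(1/\sqrt{n}),
\]
which is the desired order.

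The only real bookkeeping is the choice of $\epsilon$: it must be small enough for \eqref{eq: Taylor's decomposition for characteristic fn op} and for the bound $|\lambda_{t}|\leq e^{-\sigma t^{2}/2}$ to hold throughout $[-\epsilon,\epsilon]$, while the aperiodicity constant $q_{\epsilon}$ on $K_{\epsilon}$ must remain strictly less than one. Since $[-\pi,\pi]\cap 2\pi\bbZ=\{0\}$, both constraints are simultaneously satisfiable and no essential difficulty arises. The argument is the classical Nagaev--Guivarc'h local limit theorem transferred to the Gibbs-Markov setting; in fact the same analysis would yield the sharper asymptotic $\sqrt{n}\,m(S_{n}=x)\to 1/\sqrt{2\pi\sigma^{2}}$, but only the uniform upper bound is needed here.
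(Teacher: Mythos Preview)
Your proposal is correct and follows essentially the same route as the paper: Fourier inversion, splitting $[-\pi,\pi]$ into a small neighborhood of $0$ and its complement, using aperiodicity (Remark \ref{Spectral radius of characteristic function operator}) for exponential decay on the complement, and using the spectral decomposition $P_{t}=\lambda_{t}\pi_{t}+N_{t}$ together with a Gaussian bound on $|\lambda_{t}|^{n}$ near $0$. The only cosmetic difference is that the paper writes $|\lambda_{t}|\leq 1-ct^{2}$ and performs the substitution $t=y/\sqrt{n}$, whereas you pass directly to $|\lambda_{t}|\leq e^{-\sigma t^{2}/2}$; both lead to the same $\O(1/\sqrt{n})$ integral.
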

\begin{proof}
By the inversion formula for Fourier transform and definition of the
characterisitic function operator,
\begin{eqnarray*}
m\left(S_{n}=x\right) & = & \Re\int_{\left[-\pi,\pi\right]}m\left(e^{itS_{n}}\right)e^{-itx}dt\\
 & = & \Re\int_{\left[-\pi,\pi\right]}m\left(P_{t}^{n}\ind\right)e^{-itx}dt.
\end{eqnarray*}
By (\ref{eq: Taylor's decomposition for characteristic fn op}) there
exist a $0<\delta<\pi$ such that $P_{t}=\lambda_{t}\pi_{t}+N_{t}$
where $\lambda_{t}=1-\sigma t^{2}+\epsilon\left(t\right)$, where
$\left|\epsilon\left(t\right)\right|\leq\epsilon t^{2}$ for some
$\epsilon$ satisfying $c:=\sigma-\epsilon>0$, and the spectral radius
of $N_{t}$ satisfies $\rho\left(N_{t}\right)\leq q<1$ for all $t\in\left(-\delta,\delta\right)$.
Write $C_{\delta}=\left(-\delta,\delta\right)$ and $\bar{C}_{\delta}=\left[-\pi,\pi\right]\setminus\left(-\delta,\delta\right)$.
Then 
\begin{eqnarray}
\Re\int_{\left[-\pi,\pi\right]}m\left(P_{t}^{n}\ind\right)e^{-itx}dt & \leq & \int_{C_{\delta}}\left\Vert P_{t}^{n}\ind\right\Vert _{\mathcal{L}}dt+\int_{\bar{C}_{\delta}}\left\Vert P_{t}^{n}\ind\right\Vert _{\mathcal{L}}dt.\label{eq:Inner Eq1}
\end{eqnarray}
Now, by remark \ref{Spectral radius of characteristic function operator}
$\sup_{t\in\bar{C}_{\delta}}\left\Vert P_{t}^{n}\ind\right\Vert _{\mathcal{L}}$
exponentially tends to $0$. Hence, the second term on the right side
of the above inequality multiplied by $\sqrt{n}$ tends to $0$ as
$n$ tends to $\infty$ and in particular, is uniformly bounded. To
bound the first term, write 
\[
\left\Vert P_{t}^{n}\ind\right\Vert _{\mathcal{L}}\leq\left|\lambda_{t}^{n}\right|+\left\Vert N_{t}^{n}\ind\right\Vert \leq\left(1-ct^{2}\right)^{n}+\tilde{c}q^{n}
\]
for some constant $\tilde{c}$, which exists since $\rho\left(N_{t}\right)\leq q$
on $C_{\delta}$ . Then $ $
\[
\int_{C_{\delta}}\left\Vert P_{t}^{n}\ind\right\Vert _{\mathcal{L}}dt\leq\int_{C_{\delta}}\left(1-ct^{2}\right)^{n}dt+2\delta\tilde{c}q^{n}
\]
and by applying the substitution $t=\frac{y}{\sqrt{n}}$ we obtain
\[
\int_{C_{\delta}}\left(1-ct^{2}\right)^{n}dt=\frac{1}{\sqrt{n}}\int\limits _{\left(-\sqrt{n}\delta,\sqrt{n}\delta\right)}\left(1-c\frac{y^{2}}{n}\right)^{n}dt\leq\frac{1}{\sqrt{n}}\int\limits _{-\infty}^{\infty}e^{-cy^{2}}dy
\]
Since, the last integral converges, $\sqrt{n}\int_{C_{\delta}}\left(1-ct^{2}\right)^{n}dt$
is uniformly bounded by a constant. Since, the second term on the
right hand side of the inequality \ref{eq:Inner Eq1} tends to $0$
exponentially fast, this completes the proof.\end{proof}
\begin{rem}
\label{Rem: LLT}Note that during the proof, we showed that $\sqrt{n}\int_{C_{\delta}}\left|\lambda_{t}\right|^{n}dt$
is uniformly bounded by a constant. Essentially the same proof may
be used to show that $n\int_{C_{\delta}}\left|t\lambda_{t}^{n}\right|dt$
is uniformly bounded by a constant. We use both these facts in the
proof of the next proposition. \end{rem}
\begin{prop}
\label{prop: Potential Kernel}For all $x,y\in\bbZ$, there exists
a constant $C$ such that $\sum_{n=1}^{\infty}\left|m\left(S_{n}=x\right)-m\left(S_{n}=y\right)\right|\leq C\left|x-y\right|$. \end{prop}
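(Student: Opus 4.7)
The plan is to reduce the sum to a single supremum via an integral representation. Write $p_{n}(z):=m(S_{n}=z)$. By the inversion formula applied as in the proof of Proposition \ref{prop: Local limi prop}, together with the identity $e^{-itx}-e^{-ity}=-it\int_{y}^{x}e^{-its}\,ds$ and Fubini, one obtains
\begin{equation*}
p_{n}(x)-p_{n}(y)=\frac{-i}{2\pi}\int_{y}^{x}J_{n}(s)\,ds,\qquad J_{n}(s):=\int_{-\pi}^{\pi}t\,m(P_{t}^{n}\ind)\,e^{-its}\,dt.
\end{equation*}
Consequently $\sumnti |p_{n}(x)-p_{n}(y)|\le \frac{|x-y|}{2\pi}\,\sup_{s\in\bbR}\sumnti |J_{n}(s)|$, so the proposition reduces to the $s$-uniform bound $\sup_{s\in\bbR}\sumnti |J_{n}(s)|<\infty$.

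To establish this, split the $t$-integral at $|t|=\delta$. On $\bar{C}_{\delta}$ the integrand is dominated by $\pi\cdot q^{n}$ by Remark \ref{Spectral radius of characteristic function operator}, whose $n$-sum is $O(1)$ uniformly in $s$. On $C_{\delta}$ use the decomposition $P_{t}^{n}=\lambda_{t}^{n}\pi_{t}+N_{t}^{n}$; the $N_{t}^{n}$-piece is also $O(q^{n})$, so one is left to control
\[
K_{n}(s):=\int_{C_{\delta}}t\lambda_{t}^{n}g(t)\,e^{-its}\,dt,\qquad g(t):=m(\pi_{t}\ind),\quad g(0)=1,
\]
in two regimes depending on $n$ versus $s^{2}$.

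When $n\ge s^{2}$, substitute $t=u/\sqrt{n}$ and Taylor-expand $\lambda_{t}=1-\sigma t^{2}+O(t^{3})$, $g(t)=1+O(t)$ and $e^{-ius/\sqrt{n}}=1-ius/\sqrt{n}+O((us/\sqrt{n})^{2})$. Since $|\lambda_{u/\sqrt{n}}|^{n}\le e^{-cu^{2}}$ all resulting moment integrals converge, and crucially the leading limiting integrand $u\,e^{-\sigma u^{2}}$ is \emph{odd}, so the apparent $O(1/n)$ contribution cancels and one extracts $|K_{n}(s)|\le C(1+|s|)/n^{3/2}+Cs^{2}/n^{2}$, which sums over $n\ge s^{2}$ to $O(1)$ uniformly. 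When $n<s^{2}$, integrate by parts twice in $t$ via $e^{-its}=(i/s)\,\partial_{t}e^{-its}$; the boundary terms are exponentially small in $n$, and the interior derivative $\partial_{t}^{2}[t\lambda_{t}^{n}g(t)]$ integrates to $O(1)$ by combining Remark \ref{Rem: LLT} with the analogous bounds $n\int_{C_{\delta}}|\lambda_{t}|^{n-1}t^{2}\,dt=O(n^{-1/2})$ and $n^{2}\int_{C_{\delta}}|\lambda_{t}|^{n-2}|t|^{3}\,dt=O(1)$. This produces $|K_{n}(s)|\le C/s^{2}$ and hence $\sum_{n<s^{2}}|K_{n}(s)|=O(1)$.

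The main obstacle is the regime $n\ge s^{2}$: the immediate bound $|K_{n}(s)|\le C/n$ coming directly from Remark \ref{Rem: LLT} sums divergently, so one really has to exploit the odd symmetry of the Gaussian limit to upgrade the decay from $n^{-1}$ to $n^{-3/2}$. Once this refined estimate is in place, the two-scale splitting and the integration-by-parts argument in the complementary regime are routine bookkeeping.
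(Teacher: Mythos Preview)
Your reduction to a uniform bound on $\sup_s \sum_n |J_n(s)|$ is sound, and the disposal of the exponentially decaying pieces (the $\bar C_\delta$ integral and the $N_t^n$ contribution) is correct. The gap is in the regime $n\ge s^2$. You write $\lambda_t=1-\sigma t^2+O(t^3)$, but under the standing hypothesis $\varphi\in L^2$ the map $t\mapsto P_t$ (and hence $t\mapsto\lambda_t$) is only $C^2$; one has merely $\lambda_t=1-\sigma t^2+o(t^2)$, and $\Im\lambda_t$ need \emph{not} be $O(|t|^3)$ without a third moment of $\varphi$. Consequently the error $\lambda_{u/\sqrt n}^{\,n}-e^{-\sigma u^2}=e^{-\sigma u^2}\bigl(e^{u^2\epsilon(u/\sqrt n)}-1\bigr)$ carries no quantitative rate in $n$, and after your odd-symmetry cancellation you can only conclude $|K_n(s)|=o(1/n)$ rather than $O(n^{-3/2})$. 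Since $\sum_n o(1/n)$ may diverge, the very obstacle you flag is not actually overcome.

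The paper's proof supplies exactly the missing ingredient: instead of a third-order Taylor expansion, it controls $\Im\lambda_t$ directly via $\lambda_t=m(P_t v_t)$. Writing $\psi_t=\ind - v_t$ and using that $v_0'$ is purely imaginary (so $\Re\psi_0'=0$), one gets $|m(\Im P_t\psi_t)|\le C|t|^3$ from $C^2$-smoothness alone; the remaining piece $m(\Im P_t\ind)=m(\sin t\varphi - t\varphi)$ is handled by splitting at $|t\varphi|=1$ and \emph{interchanging} the $t$-integral with the expectation in $\varphi$, which converts the apparent third-moment requirement into a second-moment one. With this, summing over $n$ first (via $\sum_n n|\lambda_t|^{n-1}\asymp t^{-4}$) yields $\int_{C_\delta}|\Im\lambda_t|\,|t|^{-3}\,dt<\infty$, and the proposition follows without any two-scale splitting or integration by parts. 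Your framework could be repaired by inserting precisely this analysis of $\Im\lambda_t$; as written, the $O(t^3)$ expansion is unjustified and the argument does not close.
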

\begin{proof}
By the inversion formula, 
\[
\left|m\left(S_{n}=x\right)-m\left(S_{n}=y\right)\right|=\left|\Re\int_{-\left[\pi,\pi\right]}m\left(P_{t}^{n}\ind\right)\left(e^{itx}-e^{ity}\right)dt\right|.
\]
By (\ref{eq: Taylor's decomposition for characteristic fn op}) there
exist a $0<\delta<\pi$ such that $P_{t}=\lambda_{t}\pi_{t}+N_{t}$
where $\left|\lambda_{t}\right|\leq1-ct^{2}$ for some positive constant,
the spectral radius of $N_{t}$ satisfies $\rho\left(N_{t}\right)\leq q<1$
for all $t\in\left(-\delta,\delta\right)$, and $\pi_{t}=m+\eta_{t}$
with $\left\Vert \eta_{t}\right\Vert \leq\tilde{c}t$ for some $\tilde{c}\geq0$.
Write $C_{\delta}=\left(-\delta,\delta\right)$ and $\bar{C}_{\delta}=\left[-\pi,\pi\right]\setminus\left(-\delta,\delta\right)$.
Then 
\[
\left|\Re\int_{-\left[\pi,\pi\right]}m\left(P_{t}^{n}\ind\right)\left(e^{itx}-e^{ity}\right)dt\right|\leq\left|\Re\int_{C_{\delta}}m\left(P_{t}^{n}\ind\right)\left(e^{itx}-e^{ity}\right)dt\right|+\left|\int_{\bar{C}_{\delta}}m\left(P_{t}^{n}\ind\right)dt\right|.
\]
As in the proof of proposition \ref{prop: Local limi prop} the second
term on right side of the above inequality tends to $0$ exponentially
fast and therefore, its sum over $n$ converges. Thus, it is sufficient
to bound the first term. Use the expansion of the characteristic function
operator to get

\[
\left|\Re\int_{C_{\delta}}m\left(P_{t}^{n}\ind\right)\left(e^{itx}-e^{ity}\right)dt\right|\leq\left|\Re\int_{C_{\delta}}\lambda_{t}^{n}m\left(\pi_{t}\ind\right)\left(e^{itx}-e^{ity}\right)dt\right|+\left|2\cdot\int_{C_{\delta}}\left\Vert N_{t}\right\Vert ^{n}dt\right|.
\]
Since $\rho\left(N_{t}\right)\leq q<1$, the sum over $n$ of the
second term on the right hand side is finite. We turn to analyze the
first term. 

\begin{eqnarray}
\left|\Re\int_{C_{\delta}}\lambda_{t}^{n}m\left(\pi_{t}\ind\right)\cdot\left(e^{itx}-e^{ity}\right)dt\right| & = & \left|\int_{C_{\delta}}\left(\Re\lambda_{t}^{n}m\left(\pi_{t}\ind\right)\right)\left(\cos tx-\cos ty\right)dt\right|\nonumber \\
 &  & +\left|\int_{C_{\delta}}\left(\Im\lambda_{t}^{n}m\left(\pi_{t}\ind\right)\right)\left(\sin tx-\sin ty\right)dt\right|\label{eq:Potential kernel to estimate}
\end{eqnarray}
Since $\left|\Re\lambda_{t}^{n}\right|\leq\left|\lambda_{t}^{n}\right|\leq1-ct^{2}$
, and $\left\Vert \pi_{t}\right\Vert _{\mathcal{L}}=1$, we have $ $
\begin{eqnarray*}
\sum_{n=1}^{\infty}\left|\int_{C_{\delta}}\left(\Re\lambda_{t}^{n}m\left(\pi_{t}\ind\right)\right)\left(\cos tx-\cos ty\right)dt\right| & \leq & \sum_{n=1}^{\infty}\int_{c_{\delta}}\left(1-ct^{2}\right)^{n}\left|\cos tx-\cos ty\right|dt\\
 & = & \int_{c_{\delta}}\frac{1}{ct^{2}}\left|\cos tx-\cos ty\right|dt\\
 & \leq & C_{1}\left|x-y\right|
\end{eqnarray*}
for some constant $C_{1}$. 

Estimating the sum over the second term in \ref{eq:Potential kernel to estimate}
is more difficult since $\sin tx-\sin ty$ is of order $t$ instead
of $t^{2}$. We start by using $\pi_{t}=m+\eta_{t}$ to obtain
\begin{equation}
\left|\int_{C_{\delta}}\left(\Im\lambda_{t}^{n}m\left(\pi_{t}\ind\right)\right)\left(\sin tx-\sin ty\right)dt\right|\leq\int_{C_{\delta}}\left|\Im\lambda_{t}^{n}\right|\left|\sin tx-\sin ty\right|dt+\mbox{\ensuremath{\int}}_{C_{\delta}}\left|\lambda_{t}^{n}\tilde{c}t\right|\left|\sin tx-\sin ty\right|dt.\label{eq:Potential kernel to estimate 2}
\end{equation}
Using $\left|\lambda_{t}\right|\leq1-ct^{2}$ we can estimate the
second term on the right hand side of the above inequality. 
\[
\sum_{n=1}^{\infty}\mbox{\ensuremath{\int}}_{C_{\delta}}\left|\lambda_{t}^{n}\tilde{c}t\right|\left|\sin tx-\sin ty\right|dt\leq\tilde{c}\int_{C_{\delta}}\frac{1}{ct}\left|\sin tx-\sin ty\right|dt\leq C_{2}\left|x-y\right|.
\]
The estimatison of the first term on the right hand side of \ref{eq:Potential kernel to estimate 2}
will take up the rest of the proof. 

We first note that $\left|\Im\lambda_{t}^{n}\right|\leq n\left|\lambda_{t}\right|^{n-1}\left|\Im\lambda_{t}\right|$.
Then 
\begin{eqnarray*}
\left|\Im\lambda_{t}\right| & = & \left|m\left(\Im P_{t}v_{t}\right)\right|\\
 & \leq & \left|m\left(\Im P_{t}\ind\right)\right|+\left|m\left(\Im P_{t}\psi_{t}\right)\right|,
\end{eqnarray*}
where $\psi_{t}=\ind-v_{t}$. By definition of the characteristic
function operator, and the fact the $\hat{T}f$ is real if $f$ is
real, 
\begin{eqnarray*}
\left|m\left(\Im P_{t}\psi_{t}\right)\right| & \leq & \left|m\left(\hat{T}\left(\cos t\varphi\Im\psi_{t}\right)\right)\right|+\left|m\left(\hat{T}\left(\sin t\varphi\Re\psi_{t}\right)\right)\right|.
\end{eqnarray*}
 Since $m\left(\psi_{t}\right)=0$ , $m\circ\hat{T}=m$, $\left|1-\cos t\varphi\right|\leq t^{2}\varphi^{2}$,
$\left|\psi_{t}\right|=\o\left(\left|t\right|\right)$ and by the
positivitiy of the transfer operator,
\begin{eqnarray*}
\left|m\left(\hat{T}\left(\cos t\varphi\Im\psi_{t}\right)\right)\right| & = & \left|m\left(\left(\cos t\varphi-1\right)\Im\psi_{t}\right)\right|\\
 & \leq & m\left(t^{2}\varphi^{2}\left|\Im\psi_{t}\right|\right)\\
 & \leq & C_{3}\left|t\right|^{3}
\end{eqnarray*}
where we have used the finiteness of the second moment of $\varphi.$ 

Since $\psi_{0}=0$, $\Re\psi_{0}'=0$ and $\psi_{t}$ is twice continuously
differentiable, 
\[
\left|m\left(\hat{T}\left(\sin t\varphi\Re\psi_{t}\right)\right)\right|\leq C_{4}\left|t\right|^{3}.
\]
Therefore, 
\begin{eqnarray*}
\sum_{n=1}^{\infty}\int_{C_{\delta}}n\left|\lambda_{t}\right|^{n-1}\left|m\left(\Im P_{t}\psi_{t}\right)\right|\left|\sin tx-\sin ty\right|dt & \leq & \sum_{n=1}^{\infty}\int_{C_{\delta}}n\left(1-ct^{2}\right)^{n-1}\left(C_{3}+C_{4}\right)\left|t\right|^{3}\left|\sin tx-\sin ty\right|dt\\
 & \leq & \int_{C_{\delta}}\frac{1}{ct^{4}}\left(C_{3}+C_{4}\right)\left|t\right|^{3}\left|\sin tx-\sin ty\right|dt\\
 & \leq & C_{5}\left|x-y\right|
\end{eqnarray*}
Finally, since $m\left(\varphi\right)=0$ and $m\circ\hat{T}=m$ 
\begin{eqnarray*}
\left|m\left(\Im P_{t}\ind\right)\right| & = & \left|m\left(\sin t\varphi\right)\right|\\
 & = & \left|m\left(\sin t\varphi-t\varphi\right)\right|
\end{eqnarray*}
We split the last integral into parts where $\left|t\varphi\right|\leq1$
and $\left|t\varphi\right|>1$ to obtain 
\begin{eqnarray*}
\left|m\left(\Im P_{t}\ind\right)\right| & \leq & \left|m\left(\ind_{\left\{ \left|t\varphi\right|\leq1\right\} }\left(\sin t\varphi-t\varphi\right)\right)\right|+\left|m\left(\ind_{\left\{ \left|t\varphi\right|>1\right\} }\left(\sin t\varphi-t\varphi\right)\right)\right|\\
 & \leq & \left|m\left(\ind_{\left\{ \left|t\varphi\right|\leq1\right\} }\left|t\varphi\right|^{3}\right)\right|+\left|m\left(2\left|t\varphi\right|\ind_{\left\{ \left|t\varphi\right|>1\right\} }\right)\right|
\end{eqnarray*}
Thus, summing over $n$ and again using $\left|\lambda_{t}\right|\leq\left(1-ct^{2}\right)$
we have 
\begin{eqnarray*}
\sum_{n=1}^{\infty}\int_{C_{\delta}}n\left|\lambda_{t}\right|^{n-1}\left|m\left(\Im P_{t}\ind\right)\right|\left|\sin tx-\sin ty\right|dt & \leq & \int_{C_{\delta}}\frac{1}{ct^{4}}m\left(\left|t\varphi\right|^{3}\ind_{\left\{ \left|t\varphi\leq1\right|\right\} }\right)\left|\sin tx-\sin ty\right|dt+\\
 &  & +2\int_{C_{\delta}}\frac{1}{ct^{4}}m\left(\left|t\varphi\right|\ind_{\left\{ \left|t\varphi\right|>1\right\} }\right)\left|\sin tx-\sin ty\right|dt
\end{eqnarray*}
Bounding $\left|\sin tx-\sin ty\right|$ by $\left|t\left(x-y\right)\right|$
and changing the order of integration in the first term gives 
\begin{eqnarray}
\int_{C_{\delta}}\frac{1}{ct^{4}}m\left(\left|t\varphi\right|^{3}\ind_{\left\{ \left|t\varphi\leq1\right|\right\} }\right)\left|\sin tx-\sin ty\right|dt & \leq & m\left(\left|\varphi\right|^{3}\int_{-\left|\varphi\right|^{-1}}^{\left|\varphi\right|^{-1}}\left|x-y\right|dt\right)\label{eq: 1}\\
 & = & m\left(2\left|\varphi\right|^{2}\right)\left|x-y\right|\nonumber \\
 & \leq & C_{6}\left|x-y\right|\nonumber 
\end{eqnarray}
Changing the order of integration in the second term of \ref{eq: 1}
and using the fact the the integrand is an even function of $t$,
gives 
\begin{eqnarray*}
2\int_{C_{\delta}}\frac{1}{ct^{4}}m\left(\left|t\varphi\right|\ind_{\left\{ \left|t\varphi\right|>1\right\} }\right)\left|\sin tx-\sin ty\right|dt & \leq & 4m\left(\left|\varphi\right|\int_{\left|\varphi\right|^{-1}}^{\delta}\frac{1}{t^{2}}\left|x-y\right|dt\right)\\
 & \leq & C_{7}\left|x-y\right|.
\end{eqnarray*}
This completes the proof.\end{proof}
\begin{prop}
\label{prop:Tightness inequality}For any $\epsilon>0$, $1<\alpha\leq2$
there exists a constant $C$ such that for all $x,y\in\bbR$, $\frac{1}{\sqrt{n}}\leq\left|x-y\right|\leq1$
and $n\in\bbN$, ${\rm P}\left(\left|l_{n}\left(x\right)-l_{n}\left(y\right)\right|>\epsilon\right)\leq C\frac{\left|x-y\right|^{\alpha}}{\epsilon^{6}}$.
\end{prop}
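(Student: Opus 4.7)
The plan is to apply Markov's inequality with the sixth moment, reducing the statement to establishing the moment bound
$$E\bigl[(l_n(x)-l_n(y))^{6}\bigr]\leq C|x-y|^{\alpha}$$
uniformly for $n\in\bbN$ and $n^{-1/2}\leq|x-y|\leq 1$. Writing $a=\lfloor\sqrt{n}\,x\rfloor$ and $b=\lfloor\sqrt{n}\,y\rfloor$, the assumption $|x-y|\geq n^{-1/2}$ yields $|a-b|\leq 2\sqrt{n}|x-y|$. Setting $D_k:=\ind_{\{S_k=a\}}-\ind_{\{S_k=b\}}$, the sixth power expands by symmetry as
$$n^{3}\,E\bigl[(l_n(x)-l_n(y))^{6}\bigr] = 6!\sum_{0\leq k_1\leq \cdots\leq k_6\leq n}E\Bigl[\prod_{i=1}^{6}D_{k_i}\Bigr].$$

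The core step is the estimate of $E[\prod_i D_{k_i}]$, which I would carry out by Fourier inversion following the strategy of Propositions \ref{prop: Local limi prop} and \ref{prop: Potential Kernel}. Expanding each indicator as $\ind_{\{S_{k_i}=c\}}=\frac{1}{2\pi}\int_{-\pi}^{\pi}e^{it(S_{k_i}-c)}\,dt$, iterating the identity $m(e^{itS_n})=m(P_t^{n}\ind)$, and performing the change of variables $\sigma_j:=t_j+t_{j+1}+\cdots+t_6$, one arrives at
$$E\Bigl[\prod_{i=1}^{6}D_{k_i}\Bigr] = \frac{1}{(2\pi)^{6}}\,\re\!\int_{[-\pi,\pi]^{6}} m\bigl(P_{\sigma_{6}}^{k_{6}-k_{5}}\cdots P_{\sigma_{1}}^{k_{1}}\ind\bigr)\prod_{j=1}^{6}(e^{-it_j a}-e^{-it_j b})\,dt.$$
For the oscillatory factor I use $|e^{-ita}-e^{-itb}|\leq 2\min(1,|t||a-b|)\leq 2|t|^{\alpha/6}|a-b|^{\alpha/6}$, which is valid for every $\alpha\in(1,2]$ since then $\alpha/6\in(1/6,1/3]\subset[0,1]$; the product over $j$ extracts the target factor $|a-b|^{\alpha}\prod_j|t_j|^{\alpha/6}$.

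The remaining analysis is a six-dimensional analogue of the estimates of Propositions \ref{prop: Local limi prop} and \ref{prop: Potential Kernel}. Split $[-\pi,\pi]^{6}$ into the central box $(-\delta,\delta)^{6}$ and its complement; on the complement, Remark \ref{Spectral radius of characteristic function operator} yields exponentially small contributions after summation over the $k_i$'s. On the central box insert the spectral decomposition $P_{\sigma_j}=\lambda_{\sigma_j}\pi_{\sigma_j}+N_{\sigma_j}$ and expand the six-fold composition into $2^{6}$ terms; any term carrying a factor $N_{\sigma_j}^{k_j-k_{j-1}}$ decays geometrically in $k_j-k_{j-1}$ and is easily summed. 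The leading term is $\bigl(\prod_j\lambda_{\sigma_j}^{k_j-k_{j-1}}\bigr)\cdot m(\pi_{\sigma_6}\cdots\pi_{\sigma_1}\ind)$, whose projection prefactor is uniformly bounded via $\pi_t\ind=\ind+\eta_t\ind$, $\|\eta_t\|=\O(t)$. Using $|\lambda_\sigma|\leq 1-c\sigma^{2}$ together with the bound $\sum_{u=0}^{n}(1-c\sigma^{2})^{u}\leq\min(n,1/(c\sigma^{2}))$, and performing the rescaling $\sigma_j\mapsto\sigma_j/\sqrt{n}$ exactly as in Proposition \ref{prop: Local limi prop}, should produce an integral bound of order $n^{3-\alpha/2}|a-b|^{\alpha}$. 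Combining this with $|a-b|^{\alpha}\leq Cn^{\alpha/2}|x-y|^{\alpha}$ and the $n^{-3}$ normalisation then yields $E[(l_n(x)-l_n(y))^{6}]\leq C|x-y|^{\alpha}$.

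The chief technical obstacle is controlling the six-dimensional Fourier integral on the central cube in a uniform way: because the change of variables $t\mapsto\sigma$ is triangular but not diagonal, the weight $\prod_j|t_j|^{\alpha/6}$ couples all the integration variables, and the $2^{6}$ expansion terms must be estimated individually. In particular, since the projection cross-terms $\pi_{\sigma_j}\ind$ only contribute $1+\O(\sigma_j)$ with no vanishing at $\sigma_j=0$, all of the polynomial decay required for integrability near the origin must come from the $\lambda_{\sigma_j}^{k_j-k_{j-1}}$ factors in combination with the truncation $\sum_j(k_j-k_{j-1})\leq n$. Verifying convergence of the resulting $\sigma$-integral and that the exponents balance so as to give $n^{3-\alpha/2}$ is the technical heart of the argument, but everything is modelled on the one-dimensional templates furnished by Propositions \ref{prop: Local limi prop} and \ref{prop: Potential Kernel}.
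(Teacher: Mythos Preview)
Your reduction to a sixth-moment bound via Markov's inequality and the use of Fourier inversion together with the spectral decomposition $P_t=\lambda_t\pi_t+N_t$ match the paper's framework, but your route to the moment estimate is genuinely different. The paper does \emph{not} use the interpolation bound $|e^{-ita}-e^{-itb}|\le 2(|t|\,|a-b|)^{\alpha/6}$; instead it first performs a combinatorial pairing (summing out the even-indexed $z_l$'s so that consecutive indicators combine into functions $h(l,u,v)$), and then proves an inductive multi-dimensional Fourier estimate whose essential input is the potential-kernel bound of Proposition~\ref{prop: Potential Kernel}: for each index in the ``difference'' set $J$ one invokes $\sum_k\bigl|\Re\int\lambda_t^k(e^{itz}-e^{itw})\,dt\bigr|\le C|x-y|$, while for indices in $\bar J$ only the local-limit bound $\int|\lambda_t|^k\,dt\le C/\sqrt{k}$ is used. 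This yields the three-term estimate $m\bigl((L_n(x)-L_n(y))^6\bigr)\le C\bigl((\sqrt{n}|x-y|)^3+n^2|x-y|\log n+n^2(\log n)^2\bigr)$, from which the bound $C|x-y|^\alpha$ for $l_n$ is extracted \emph{a posteriori} using the constraint $n^{-1/2}\le|x-y|\le1$. Your approach is more direct and, provided the six-dimensional integral converges (it does: after rescaling, each variable carries weight $\min(1,\tau_j^{-2})|\tau_j|^{p}$ with $p\le\alpha/3<1$), it delivers the $|x-y|^\alpha$ bound in one stroke and bypasses Proposition~\ref{prop: Potential Kernel} altogether --- the most delicate preliminary in the paper. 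The cost is that you must handle the coupling between the $t$- and $\sigma$-coordinates with care: the ``central box'' for the spectral decomposition has to be taken in the $\sigma$-variables rather than the $t$-variables, and since $\sigma_j$ ranges over $[-6\pi,6\pi]$ the $2\pi$-periodicity of $P_t$ produces finitely many resonances that each require separate treatment. The paper's inductive scheme avoids this by peeling off one variable at a time and never leaving $[-\pi,\pi]$.
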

To prove this estimate, let $L_{n}\left(x\right)=\#\left\{ 1\leq k\leq n|\, S_{k}=x\right\} $,
$x\in\bbZ$. Then by definition $l_{n}\left(x\right)=\frac{L_{n}\left(\left[\sqrt{n}x\right]\right)}{\sqrt{n}}$.
It is enough to prove
\begin{prop}
\label{prop: Moment inequality}For any $1<\alpha<2$ there exists
a constant $C$ such that for all $x,y\in\bbZ$ and $n\in\bbN$, $m\left(\left(L_{n}\left(x\right)-L_{n}\left(y\right)\right)^{6}\right)\leq C\cdot\left(\left(\sqrt{n}\left|x-y\right|\right)^{3}+\sqrt{n}^{4}\left|x-y\right|\log n+\sqrt{n}^{4}\left(\log n\right)^{2}\right)$. 
\end{prop}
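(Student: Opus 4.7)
The plan is to expand the sixth moment as a multi-time sum, represent it through the characteristic function operators $P_{t}$ of Section~2, and then reduce the resulting operator integrals by means of the spectral decomposition. Setting $\Delta_{k}:=\mathbf{1}_{\{S_{k}=x\}}-\mathbf{1}_{\{S_{k}=y\}}$, one has
$$m\bigl((L_{n}(x)-L_{n}(y))^{6}\bigr)=\sum_{k_{1},\dots,k_{6}=1}^{n}m\Bigl(\prod_{i=1}^{6}\Delta_{k_{i}}\Bigr),$$
and by symmetry it suffices to bound $6!\sum_{k_{1}<\cdots<k_{6}\le n}\bigl|m(\prod_{i}\Delta_{k_{i}})\bigr|$, since tuples with coincidences reduce to the same estimate in fewer time indices and contribute lower-order terms. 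Applying Fourier inversion $\mathbf{1}_{\{S_{k}=z\}}=\tfrac{1}{2\pi}\int_{-\pi}^{\pi}e^{it(S_{k}-z)}\,dt$ and iterating the identity $m(f\cdot g\circ T)=m(\hat{T}f\cdot g)$, I obtain, for each strictly ordered tuple,
$$m\Bigl(\prod_{i=1}^{6}\Delta_{k_{i}}\Bigr)=\frac{1}{(2\pi)^{6}}\int_{[-\pi,\pi]^{6}}\prod_{i=1}^{6}\bigl(e^{-it_{i}x}-e^{-it_{i}y}\bigr)\,m\bigl(P_{u_{6}}^{d_{6}}\cdots P_{u_{1}}^{d_{1}}\mathbf{1}\bigr)\,dt_{1}\cdots dt_{6},$$
with $d_{j}:=k_{j}-k_{j-1}$ (and $k_{0}:=0$) and $u_{j}:=\sum_{i\ge j}t_{i}$.

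Next, I would substitute the spectral decomposition into each $P_{u_{j}}^{d_{j}}$: for $|u|<\delta$, $P_{u}=\lambda_{u}\pi_{u}+N_{u}$ with $|\lambda_{u}|\le 1-cu^{2}$ and $\rho(N_{u})\le q<1$, while on $[-\pi,\pi]\setminus(-\delta,\delta)$ Remark~\ref{Spectral radius of characteristic function operator} gives $\|P_{u}^{d}\|\le q_{K}^{d}$. Expanding $\prod_{j}(\lambda_{u_{j}}^{d_{j}}\pi_{u_{j}}+N_{u_{j}}^{d_{j}})\mathbf{1}$, each mixed term containing at least one $N_{u_{j}}^{d_{j}}$ factor is geometrically summable in the corresponding $d_{j}$, and after integrating in $t$ produces a contribution of strictly lower order in $n$, to be absorbed into the logarithmic terms of the stated bound. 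Using $\pi_{u}=m+\eta_{u}$ with $\|\eta_{u}\|=\O(|u|)$ and $v_{u}=\mathbf{1}+\O(u)$, the surviving ``pure'' term simplifies to $\prod_{j}\lambda_{u_{j}}^{d_{j}}\cdot\bigl(1+\O(\max_{j}|u_{j}|)\bigr)$, and the $\O$-correction again yields only logarithmic contributions by the same arguments used in the proofs of Propositions~\ref{prop: Local limi prop} and~\ref{prop: Potential Kernel}.

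It remains to estimate
$$\sum_{\vec{d}:\,\sum d_{j}\le n}\int_{[-\pi,\pi]^{6}}\prod_{i=1}^{6}\bigl|e^{-it_{i}x}-e^{-it_{i}y}\bigr|\prod_{j=1}^{6}|\lambda_{u_{j}}|^{d_{j}}\,dt_{1}\cdots dt_{6}.$$
Summing the geometric series under the constraint $\sum d_{j}\le n$ gives factors $\min\{n,(cu_{j}^{2})^{-1}\}$, and combining with $|e^{-it_{i}x}-e^{-it_{i}y}|\le\min(2,|t_{i}(x-y)|)$ reduces the problem to a Euclidean integral that, after the rescaling $t_{i}=s_{i}/\sqrt{n}$, is the exact analogue of the sixth-moment expression for an i.i.d.\ random walk with variance $\sigma$. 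Its main contribution is bounded by $(\sqrt{n}|x-y|)^{3}$, matching the expected $|\xi-\eta|^{3}$ behaviour at the rescaled positions $\xi=x/\sqrt{n}$, $\eta=y/\sqrt{n}$ for the Brownian local time. The logarithmic corrections arise at the thresholds $|t_{i}|\sim|x-y|^{-1}$ (where $\min(2,|t_{i}(x-y)|)$ switches regime) and the Fourier truncations $|u_{j}|\gtrsim 1/\sqrt{n}$ coming from $\sum d_{j}\le n$: each such boundary produces a factor $\int_{1/|x-y|}^{\delta}dt/t=\O(\log(\sqrt{n}|x-y|))$, a single boundary giving the $n^{2}|x-y|\log n$ term and two boundaries giving $n^{2}(\log n)^{2}$.

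The main obstacle is the combinatorial bookkeeping required to track all mixed terms in the expansion of $\prod_{j}(\lambda_{u_{j}}^{d_{j}}\pi_{u_{j}}+N_{u_{j}}^{d_{j}})$ together with the decompositions $\pi_{u}=m+\eta_{u}$ and $v_{u}=\mathbf{1}+\O(u)$, and in verifying that every error class contributes to at most one of the three terms in the bound without spoiling the $|x-y|^{3}$ scaling of the leading piece. A related subtlety is the interplay between the time constraint $\sum d_{j}\le n$ and the Fourier thresholds $|t_{i}|\sim|x-y|^{-1}$, which is precisely what converts the would-be divergences of the unconstrained Euclidean integral into the two logarithmic factors rather than powers of $n$.
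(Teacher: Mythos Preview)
Your outline has the right overall architecture (Fourier inversion, spectral decomposition of $P_t$, separation into leading eigenvalue part and geometrically small remainders), but it is missing the one combinatorial idea that actually produces the exponent $3$ in $(\sqrt{n}\,|x-y|)^{3}$.

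In your representation the six difference factors $e^{-it_ix}-e^{-it_iy}$ live in the \emph{position} Fourier variables $t_i$, while the eigenvalues $\lambda_{u_j}$ live in the \emph{increment} variables $u_j=\sum_{i\ge j}t_i$; after the change of variables each difference becomes a function of $u_i-u_{i+1}$, so consecutive increment variables are coupled. Taking absolute values at this stage and bounding by $\prod_i\min(2,|t_i||x-y|)\prod_j\min(n,(cu_j^2)^{-1})$ does \emph{not} give $(\sqrt{n}|x-y|)^3$ without further work: expanding $|u_i-u_{i+1}|\le|u_i|+|u_{i+1}|$ and integrating term by term produces contributions of order $|x-y|^{6}\sqrt{n}^{3}$ (and worse), not $|x-y|^{3}\sqrt{n}^{3}$. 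Your appeal to the i.i.d.\ local-time moment bound is circular: the i.i.d.\ proofs (Borodin, Jain--Pruitt) obtain the correct power of $|x-y|$ precisely by the decoupling step you have skipped.

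What the paper does is first write $\prod_{l=1}^{6}\psi(S_{j_l})=\sum_{z_1,\dots,z_6\in\{x,y\}}\prod_l\psi(z_l)\mathbf{1}_{\{z_l-z_{l-1}\}}(S_{k_l})$, sum only over the \emph{even} $z_{2l}$ to obtain factors
\[
h(l,u,v)=\mathbf{1}_{\{x-u\}}(S_{k_{2l}})\mathbf{1}_{\{v-x\}}(S_{k_{2l+1}})-\mathbf{1}_{\{y-u\}}(S_{k_{2l}})\mathbf{1}_{\{v-y\}}(S_{k_{2l+1}}),
\]
and then add and subtract a cross term so that each $h(l,\cdot,\cdot)$ splits into a \emph{single-variable} difference times a \emph{single-variable} simple exponential. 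After Fourier inversion every factor $\xi_l$ depends on one increment variable $t_l$ only, and exactly three of the six $\xi_l$'s are differences while three are simple exponentials. This decoupling is what permits an inductive estimate in which the leading eigenvalue contribution factors coordinatewise: each ``difference'' coordinate contributes $\sum_k|\mathrm{Re}\int\lambda_t^k(e^{itz}-e^{itw})dt|\le C|x-y|$ via the potential-kernel estimate, and each ``simple'' coordinate contributes $\sum_k\int|\lambda_t|^k dt\le C\sqrt{n}$ via the local-limit estimate, yielding the leading $(\sqrt{n}|x-y|)^3$. The $\eta_t$ correction $\|\eta_t\|=\O(|t|)$ then accounts for the $\sqrt{n}^{4}|x-y|\log n$ and $\sqrt{n}^{4}(\log n)^{2}$ terms; these are not mere bookkeeping but genuine contributions to the stated bound.

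In short: the step you flag as ``combinatorial bookkeeping'' is in fact the heart of the argument, and your sketch does not contain it.
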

To see that proposition \ref{prop:Tightness inequality} follows from
this, note that
\begin{eqnarray*}
m\left(\left(l_{n}\left(x\right)-l_{n}\left(y\right)\right)^{6}\right) & = & \frac{1}{\sqrt{n}^{6}}m\left(\left(L_{n}\left(\left[\sqrt{n}x\right]\right)-L_{n}\left(\left[\sqrt{n}y\right]\right)\right)^{6}\right)\\
 & \leq & \frac{1}{n^{3}}C\left(n^{3}\left|x-y\right|^{3}+\left|x-y\right|n^{2}\log n+n^{2}\left(\log n\right)^{2}\right)\\
 & \leq & C\left(\left|x-y\right|^{3}+\frac{1}{n\left|x-y\right|}\left|x-y\right|^{2}\log n+\frac{\left(\log n\right)^{2}}{n\left|x-y\right|^{\alpha}}\left|x-y\right|^{\alpha}\right)\\
 & \leq & \tilde{C}\left|x-y\right|^{\alpha}
\end{eqnarray*}
for any $1<\alpha<2$ ($\tilde{C}$ of course depends on $\alpha$).
The last inequality holds since $\frac{1}{\sqrt{n}}\leq\left|x-y\right|\leq1$.
Proposition \ref{prop:Tightness inequality} now follows from Markov's
inequality.

We turn to the proof of proposition \ref{prop: Moment inequality}.
Using definition of $L_{n}\left(x\right)$ and writing $\psi\mbox{(z)=\ensuremath{\ind}}_{\left\{ x\right\} }-\ind_{\left\{ y\right\} }$,
we obtain

\[
m\left(\left(L_{n}\left(x\right)-L_{n}\left(y\right)\right)^{2p}\right)=m\left(\left(\sum_{k=1}^{n}\ind_{\left\{ x\right\} }\left(S_{k}\right)-\ind_{\left\{ y\right\} }\left(S_{k}\right)\right)^{2p}\right)=\sum_{\bar{i}\in I}m\left(\prod_{l=1}^{2p}\psi\left(S_{i_{l}}\right)\right)
\]
where $I$ is the set of all tuples of length $2p$ of integers between
$1$ and $n$. Clearly, it is enough to prove the estimate for the
case where the coordinates in $I$ are not decreasing. Therefore,
we denote $J=\left\{ \left(j_{1},...,j_{2p}\right)\left|j_{1},..,j_{2p}\in\left\{ 1,...,n\right\} \right.\right\} $
and estimate 
\[
\sum_{\bar{j}\in J}m\left(\prod_{l=1}^{2p}\psi\left(S_{j_{l}}\right)\right).
\]
Fix $\bar{j}\in J$, and let $\bar{k}=\left(j_{1},j_{2}-j_{1},...,j_{2p}-j_{2p-1}\right)$.
Then $ $

\[
\prod_{l=1}^{2p}\psi\left(S_{i_{l}}\right)=\sum_{z_{1},z_{2},,,,,z_{2p}}m\left(\prod_{l=1}^{2p}\psi\left(z_{l}\right)\ind_{\left\{ z_{l}-z_{l-1}\right\} }\left(S_{k_{l}}\right)\right)
\]
where the sum goes over all $\bar{z}=\left\{ \left(z_{1},...z_{2p}\right)\left|z_{i}\in\left\{ x,y\right\} ,i=1,...,2p\right.\right\} $
and $z_{0}=0$. Summing over $z's$ having even subscripts we obtain 

\[
\prod_{l=1}^{2p}\psi\left(S_{i_{l}}\right)=\sum_{z_{1},z_{3},...,z_{2p-1}}m\left(\psi\left(z_{1}\right)\ind_{\left\{ z_{1}\right\} }\left(S_{k_{1}}\right)\left(\prod_{l=1}^{p-1}\psi\left(z_{2l-1}\right)h\left(l,z_{2l-1},z_{2l+1}\right)\right)h\left(p,z_{2p-1}\right)\right)
\]
where 
\[
h\left(l,u,v\right)=\ind_{\left\{ x-u\right\} }\left(S_{k_{2l}}\right)\ind_{\left\{ v-x\right\} }\left(S_{k_{2l}+1}\right)-\ind_{\left\{ y-u\right\} }\left(S_{k_{2l}}\right)\ind_{\left\{ v-y\right\} }\left(S_{k_{2l}+1}\right)
\]
and 
\[
h\left(l,u\right)=\ind_{\left\{ x-u\right\} }\left(S_{2l}\right)-\ind_{\left\{ y-u\right\} }\left(S_{2l}\right).
\]

We can now take absolute values and write

\begin{equation}
\prod_{l=1}^{2p}\psi\left(S_{i_{l}}\right)\leq\sum_{z_{1},z_{3},...,z_{p}}\left|m\left(\ind_{\left\{ z_{1}\right\} }\left(S_{k_{1}}\right)\left(\prod_{l=1}^{p-1}h\left(l,z_{2l-1},z_{2l+1}\right)\right)h\left(p,z_{2p-1}\right)\right)\right|.\label{eq:Sum to estimate}
\end{equation}
\[
\]

Adding and subtracting $\ind_{\left\{ x-u\right\} }\left(S_{k_{2l}}\right)\ind_{\left\{ v-y\right\} }\left(S_{k_{2l}+1}\right)$
from $h\left(l,u,v\right)$ we have
\begin{eqnarray*}
h\left(l,u,v\right) & = & \ind_{\left\{ x-u\right\} }\left(S_{k_{2l}}\right)\left(\ind_{\left\{ v-x\right\} }\left(S_{k_{2l+1}}\right)-\ind_{\left\{ v-y\right\} }\left(S_{k_{2l}+1}\right)\right)\\
 &  & +\left(\ind_{\left\{ x-u\right\} }\left(S_{k_{2l}}\right)-\ind_{\left\{ y-u\right\} }\left(S_{k_{2l}}\right)\right)\ind_{\left\{ v-y\right\} }\left(S_{2_{kl}+1}\right).
\end{eqnarray*}

At this point we use the inversion formula for the Fourier transform
to estimate (\ref{eq:Sum to estimate}). To do this, let $\bar{t}=\left(t_{1},...,t_{2p}\right)$
and write 
\begin{eqnarray*}
\tilde{h}_{1}\left(l,u,v\right) & = & e^{it_{2l}\left(x-u\right)}\left(e^{-it_{2l+1}\left(z_{2l+1}-x\right)}-e^{-it_{2l+1}\left(z_{2l+1}-y\right)}\right)\\
\tilde{h}_{2}\left(l,u,v\right) & = & \left(e^{it_{2l}\left(x-u\right)}-e^{it_{2l}\left(y-u\right)}\right)e^{it_{2l+1}\left(v-y\right)}\\
\tilde{h}\left(l,u\right) & = & e^{it_{2l}\left(x-u\right)}-e^{it_{2l}\left(y-u\right)}.
\end{eqnarray*}
 Fix $z_{1},z_{3},...,z_{p}$. Then by the inversion formula\\
$\qquad\qquad m\left(\ind_{\left\{ z_{1}\right\} }\left(S_{k_{1}}\right)\left(\prod_{l=1}^{p-1}h\left(l,z_{2l-1},z_{2l+1}\right)\right)h\left(p,z_{2p-1}\right)\right)=$
\[
\Re\int\limits _{\left[-\pi,\pi\right]^{2p}}m\left(e^{i\left\langle t,\bar{S}_{\bar{k}}\right\rangle }\right)e^{it_{1}z_{1}}\left(\prod_{l=1}^{p-1}\tilde{h}_{1}\left(l,z_{2l-1},z_{2l+1}\right)\right)\tilde{h}\left(p,z_{2p-1}\right)dt_{1}...dt_{2p}
\]
\begin{eqnarray*}
\Re\int\limits _{\left[-\pi,\pi\right]^{2p}}m\left(e^{i\left\langle t,\bar{S}_{\bar{k}}\right\rangle }\right)e^{it_{1}z_{1}}\left(\prod_{l=1}^{p-1}\tilde{h}_{1}\left(l,z_{2l-1},z_{2l+1}\right)\right)\tilde{h}\left(p,z_{2p-1}\right)dt_{1}...dt_{2p}
\end{eqnarray*}
The next proposition completes the proof: 
\begin{prop}
Let $x,y\in\bbZ$, $\bar{z}=\left(z_{1},...,z_{p}\right),$ $\bar{w}=\left(w_{1},...,w_{p}\right)$
be two vectors with integer coordinates such that $z_{i}-w_{i}=x-y$
and let $\bar{k}$ be a $p$-tuple of nonnegative integers. Also,
let $\bar{\xi}$ be a vector with the $i$-th coordinate being equal
to either $e^{it_{i}z_{i}}-e^{it_{i}w_{i}}$ or $e^{it_{i}z_{i}}$.
Denote by $J$ the set of coordinates $1\leq i\leq p$ such the $\xi_{i}=e^{it_{i}z_{i}}-e^{it_{i}w_{i}}$
and by $\bar{J}$ the set of coordinates $1\leq i\leq p$ with $\xi_{i}=e^{itz_{i}}$.
Then 
\[
\sum_{1\leq k_{1}\leq...\leq k_{p}\leq n}\left|\Re\int\limits _{\left[-\pi,\pi\right]^{p}}m\left(e^{i\left\langle t,\bar{S}_{\bar{k}}\right\rangle }\right)\prod_{l=1}^{p}\xi_{l}\, dt_{1}...dt_{p}\right|\leq C_{p}\left|x-y\right|^{\#J}\sqrt{n}^{\#\bar{J}}+\sqrt{n}^{\#\bar{J}+1}\left|x-y\right|^{\#J-2}\log n+\sqrt{n}^{p-2}\left(\log n\right)^{2}
\]
where $C_{p}$ is a contant. \end{prop}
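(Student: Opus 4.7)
My plan is to reduce the bound to a product of single-variable integrals. By iterated application of the duality $m(f\cdot g\circ T^k)=m(\hat T^k f\cdot g)$ to the stationary-increment structure underlying $\bar S_{\bar k}$, one has
\[
m\bigl(e^{i\langle t,\bar S_{\bar k}\rangle}\bigr)=m\bigl(P_{t_p}^{k_p}P_{t_{p-1}}^{k_{p-1}}\cdots P_{t_1}^{k_1}\mathbf{1}\bigr).
\]
I would then split the integration domain $[-\pi,\pi]^p$ into $C_\delta^p$ and its complement, exactly as in the proofs of Propositions~\ref{prop: Local limi prop} and \ref{prop: Potential Kernel}. In any region where some $t_l\in\bar C_\delta$, Remark~\ref{Spectral radius of characteristic function operator} gives $\|P_{t_l}^{k_l}\mathbf{1}\|_{\mathcal L}\leq q^{k_l}$, and the resulting geometric decay in $k_l$ makes these contributions bounded uniformly in $n$ after summation and integration, hence absorbable into the claimed bound.

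On $C_\delta^p$ I would insert $P_{t_l}=\lambda_{t_l}\pi_{t_l}+N_{t_l}$ into each factor and expand. Summands containing any $N_{t_l}^{k_l}$ decay exponentially in that $k_l$ and produce only lower-order terms. The retained all-$\pi_{t_l}$ summand equals
\[
\prod_{l=1}^p\lambda_{t_l}^{k_l}\cdot m\bigl(\pi_{t_p}\pi_{t_{p-1}}\cdots\pi_{t_1}\mathbf{1}\bigr),
\]
and writing $\pi_t f=\mu_t(f)v_t$ with $\mu_0=m$, $v_0=\mathbf{1}$, $m(v_t)=1$ and all remainders of size $O(|t|)$, the projection product reduces to $1+\mathcal R(t_1,\dots,t_p)$, where $\mathcal R$ is a finite sum of monomials each carrying at least one factor $t_l$ or a product $t_lt_{l'}$.

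Putting $\mathcal R$ aside and bounding the ordered sum over $\bar k$ by a constant times the free product, the leading contribution becomes
\[
\sum_{\bar k}\int_{C_\delta^p}\prod_{l=1}^p\lambda_{t_l}^{k_l}\xi_l\,dt\;\leq\;C\prod_{l=1}^p\int_{C_\delta}\Bigl(\sum_{k=0}^n\lambda_{t_l}^k\Bigr)\xi_l\,dt_l.
\]
Using $\bigl|\sum_{k=0}^n\lambda_t^k\bigr|\leq C\min(n,1/t^2)$, for $l\in\bar J$ the substitution $t\mapsto t/\sqrt{n}$ gives $\int_{C_\delta}\min(n,1/t^2)\,dt\lesssim\sqrt{n}$ as in Proposition~\ref{prop: Local limi prop}; for $l\in J$ the identity $e^{it_l z_l}-e^{it_l w_l}=e^{it_l w_l}(e^{it_l(x-y)}-1)$ and a rescaling give $\int_{C_\delta}|\xi_l|/t^2\,dt\lesssim|x-y|$ exactly as in Proposition~\ref{prop: Potential Kernel}. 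Multiplying produces the leading bound $C_p|x-y|^{\#J}\sqrt{n}^{\#\bar J}$.

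The two remaining terms come from $\mathcal R$ and from the discarded branches of the expansion. Each additional factor $t_l$ in the integrand turns $\int\min(n,1/t^2)\,dt\lesssim\sqrt{n}$ into $\int\min(n,1/t^2)\cdot t\,dt\lesssim\log n$, thereby replacing a $\sqrt{n}$ by a $\log n$ at a $\bar J$-coordinate, and modifies each $J$-coordinate integral by at most a further factor of $\log n$ or $\sqrt{n}$. Careful enumeration of where the correction factors can land identifies the two worst combinations: a single cross correction that disables two $J$-coordinates, giving $\sqrt{n}^{\#\bar J+1}|x-y|^{\#J-2}\log n$; and two independent corrections each reducing a $\sqrt{n}$ to $\log n$, giving $\sqrt{n}^{p-2}(\log n)^2$. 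The principal obstacle is precisely this systematic bookkeeping of all cross corrections and verifying that no worse combination can arise; the one-dimensional analytic inputs are only mild refinements of those already used in the earlier two propositions.
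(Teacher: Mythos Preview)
Your reduction has a genuine gap at the step where you claim that for $l\in J$,
\[
\int_{C_\delta}\min\bigl(n,\tfrac{1}{t^2}\bigr)\,\bigl|e^{it_l z_l}-e^{it_l w_l}\bigr|\,dt_l\ \lesssim\ |x-y|.
\]
This is false. With absolute values the integrand behaves like $\min(n,1/t^2)\cdot|t(x-y)|$ near $t=0$, and the middle range $1/\sqrt{n}\le|t|\le 1/|x-y|$ contributes $|x-y|\log(\sqrt{n}/|x-y|)$, i.e.\ an unavoidable extra $\log n$. Proposition~\ref{prop: Potential Kernel} obtains the clean $|x-y|$ bound only after taking the real part: the cosine piece has second-order cancellation, and the sine piece is saved because $\Im\lambda_t$ is itself $O(|t|^3)$. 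Once you pass to $\bigl|\prod_l\int\lambda_{t_l}^{k_l}\xi_l\,dt_l\bigr|$ you have lost the real part on each factor, so the potential-kernel argument is not available and your leading term acquires $(\log n)^{\#J}$, which is too weak for the stated inequality and for the downstream moment bound.

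The paper circumvents this by induction on $p$ rather than a full product expansion. It peels off only the outermost $t_p$-integral via $P_{t_p}^{k_p}=\lambda_{t_p}^{k_p}m+\lambda_{t_p}^{k_p}\eta_{t_p}+N_{t_p}^{k_p}$. The crucial observation is that the remaining $(p{-}1)$-fold integral $\int_{[-\pi,\pi]^{p-1}}m(P_{t_{p-1}}^{k_{p-1}}\cdots P_{t_1}^{k_1}\mathbf{1})\prod_{l<p}\xi_l\,dt$ is itself a real number (a difference of inverse Fourier transforms), so $\Re(AB)=A\cdot\Re(B)$ factorizes cleanly. The single-variable factor $\Re\int_{C_\delta}\lambda_{t_p}^{k_p}\xi_p\,dt_p$ then retains the real part, and summing over $k_p$ legitimately invokes Proposition~\ref{prop: Potential Kernel} to get $C|x-y|$ when $p\in J$ (or $C\sqrt{n}$ when $p\in\bar J$ via Remark~\ref{Rem: LLT}). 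The $\eta_{t_p}$ branch is where the $\log n$ correction enters, since $\int_{C_\delta}|\lambda_{t_p}^{k_p}\eta_{t_p}|\,dt_p\lesssim 1/k_p$; expanding one more step produces the $\sqrt{n}^{\#\bar J+1}|x-y|^{\#J-2}\log n$ and $\sqrt{n}^{p-2}(\log n)^2$ terms. Your enumeration of correction terms is in the right spirit, but without the inductive reality trick the main term itself is already off.
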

\begin{proof}
We may assume that $\left|x-y\right|\leq\sqrt{n}$ (since certainly
$\left|x-y\right|$ is bounded by constant times $\sqrt{n}$). By
definition of the characteristic function operator 
\[
m\left(e^{i\left\langle t,\bar{S}_{\bar{k}}\right\rangle }\right)=m\left(P_{t_{p}}^{k_{p}}P_{t_{p-1}}^{k_{p-1}}...P_{t_{1}}^{k_{1}}\ind\right).
\]
By (\ref{eq: Taylor's decomposition for characteristic fn op}) there
exist a $0<\delta<\pi$ such that $P_{t}=\lambda_{t}m+\lambda_{t}\eta_{t}+N_{t}$
where $\lambda_{t}\leq1-ct^{2}$ for some positive constant $c$,
$\left\Vert \eta_{t}\right\Vert \leq c\left|t\right|$ and the spectral
radius of $N_{t}$ satisfies $\rho\left(N_{t}\right)\leq q<1$ for
all $t\in\left(-\delta,\delta\right)$. We denote $C_{\delta}=\left(-\delta,\delta\right)$
and $\bar{C}_{\delta}=\left[-\pi,\pi\right]\setminus\left(-\delta,\delta\right)$.
Thus, 
\begin{eqnarray}
\Re\int_{\left[-\pi,\pi\right]^{p}}m\left(e^{i\left\langle t,\bar{S}_{\bar{k}}\right\rangle }\right)\prod_{l=1}^{p}\xi_{l}\, dt_{1}...dt_{p} & = & \Re\int\limits _{\left[-\pi,\pi\right]^{p-1}}\int\limits _{C_{\delta}}\lambda_{t_{p}}^{k_{p}}m\left(P_{t_{p-1}}^{k_{p-1}}...P_{t_{1}}^{k_{1}}\ind\right)\prod_{l=1}^{p}\xi_{l}\, dt_{p}...dt_{1}\label{eq: Char. Fn. Inequality}\\
 &  & +\Re\int\limits _{\left[-\pi,\pi\right]^{p-1}}\int\limits _{C_{\delta}}m\left(\lambda_{t_{p}}^{k_{p}}\eta_{t_{p}}P_{t_{p-1}}^{k_{p-1}}...P_{t_{1}}^{k_{1}}\ind\right)\prod_{l=1}^{p}\xi_{l}\, dt_{p}...dt_{1}\nonumber \\
 &  & +\Re\int\limits _{\left[-\pi,\pi\right]^{p-1}}\int\limits _{C_{\delta}}m\left(N_{t_{p}}P_{t_{p-1}}^{k_{p-1}}...P_{t_{1}}^{k_{1}}\ind\right)\prod_{l=1}^{p}\xi_{l}\, dt_{p}...dt_{1}\nonumber \\
 &  & +\Re\int\limits _{\left[-\pi,\pi\right]^{p-1}}\int\limits _{\bar{C}_{\delta}}m\left(P_{t_{p}}^{k_{p}}P_{t_{p-1}}^{k_{p-1}}...P_{t_{1}}^{k_{1}}\ind\right)\prod_{l=1}^{p}\xi_{l}\, dt_{p}...dt_{1}\nonumber 
\end{eqnarray}
We handle each of the terms on the right hand side separately. Since
$\int\limits _{\left[-\pi,\pi\right]^{p-1}}m\left(P_{t_{p-1}}^{k_{p-1}}...P_{t_{1}}^{k_{1}}\ind\right)\prod_{l=1}^{p-1}\xi_{l}\, dt_{p-1}...dt_{1}$
is a difference of inverse Fourier transforms and therefore real,
\begin{eqnarray*}
\Re\int\limits _{\left[-\pi,\pi\right]^{p-1}}\int\limits _{C_{\delta}}\lambda_{t_{p}}^{k_{p}}m\left(P_{t_{p-1}}^{k_{p-1}}...P_{t_{1}}^{k_{1}}\ind\right)\prod_{l=1}^{p}\xi_{l}\, dt_{p}...dt_{1} & = & \left(\Re\int\limits _{\left[-\pi,\pi\right]^{p-1}}m\left(P_{t_{p-1}}^{k_{p-1}}...P_{t_{1}}^{k_{1}}\ind\right)\prod_{l=1}^{p-1}\xi_{l}\, dt_{p-1}...dt_{1}\right)\\
 &  & \cdot\left(\Re\int_{C_{\delta}}\lambda_{t_{p}}^{k_{p}}\xi_{p}dt_{p}\right)
\end{eqnarray*}
If $p\in J$, by the proof of the potential kernel estimate (proposition
\ref{prop: Potential Kernel}) $\sum_{1\leq k_{p}\leq n}\left|\Re\int_{C_{\delta}}\lambda_{t_{p}}^{k_{p}}\xi_{p}dt_{p}\right|\leq C_{1}\left|x-y\right|$.
Otherwise by remark \ref{rem: Norm Boundness} this term is bounded
by $\frac{C_{1}}{\sqrt{k_{p}}}$ and $\sum_{1\leq k_{p}\leq n}\frac{C_{1}}{\sqrt{k_{p}}}\leq C_{2}\sqrt{n}$.
To estimate 
\[
\Re\int\limits _{\left[-\pi,\pi\right]^{p-1}}m\left(P_{t_{p-1}}^{k_{p-1}}...P_{t_{1}}^{k_{1}}\ind\right)\prod_{l=1}^{p-1}\xi_{l}\, dt_{p-1}...dt_{1}
\]
 we may use the induction hypothesis. Combining the two estimates
we obtain
\[
\sum_{1\leq k_{1},...,k_{p}\leq n}\left|\Re\int\limits _{\left[-\pi,\pi\right]^{p-1}}\int\limits _{C_{\delta}}\lambda_{t_{p}}^{k_{p}}m\left(P_{t_{p-1}}^{k_{p-1}}...P_{t_{1}}^{k_{1}}\ind\right)\prod_{l=1}^{p}\xi_{l}\, dt_{p}...dt_{1}\right|
\]
\[
\leq C_{2}\left|x-y\right|^{\#J}\sqrt{n}^{\#\bar{J}}+\sqrt{n}^{\#\bar{J}+1}\left|x-y\right|^{\#J-2}\log n+\sqrt{n}^{p-2}\left(\log n\right)^{2}.
\]
We now turn to the second term in \ref{eq: Char. Fn. Inequality}.
Expanding one more term in the integral we get\\
$\qquad\qquad\Re\int\limits _{\left[-\pi,\pi\right]^{p-1}}\int\limits _{C_{\delta}}m\left(\lambda_{t_{p}}^{k_{p}}\eta_{t_{p}}P_{t_{p-1}}^{k_{p-1}}...P_{t_{1}}^{k_{1}}\ind\right)\prod_{l=1}^{p}\xi_{l}\, dt_{p}...dt_{1}$
\begin{eqnarray}
=\Re\int\limits _{\left[-\pi,\pi\right]^{p-2}}\int\limits _{C_{\delta}\times C_{\delta}}m\left(\lambda_{t_{p}}^{k_{p}}\eta_{t_{p}}\lambda_{t_{p-1}}^{k_{p-1}}m\left(P_{t_{p-2}}^{k_{p-2}}...P_{t_{1}}^{k_{1}}\ind\right)\right)\prod_{l=1}^{p}\xi_{l}\, dt_{p}...dt_{1}\label{eq: Char. Fn. Ineq. 2}\\
+\Re\int\limits _{\left[-\pi,\pi\right]^{p-2}}\int\limits _{C_{\delta}\times C_{\delta}}m\left(\lambda_{t_{p}}^{k_{p}}\eta_{t_{p}}\lambda_{t_{p-1}}^{k_{p-1}}\eta_{t_{p-1}}P_{t_{p-2}}^{k_{p-2}}...P_{t_{1}}^{k_{1}}\ind\right)\prod_{l=1}^{p}\xi_{l}\, dt_{p}...dt_{1}\nonumber \\
+\Re\int_{\left[-\pi,\pi\right]^{p-2}}\int\limits _{C_{\delta}\times C_{\delta}}m\left(\lambda_{t_{p}}^{k_{p}}\eta_{t_{p}}N_{t_{p-1}}^{k_{p-1}}P_{t_{p-2}}^{k_{p-2}}...P_{t_{1}}^{k_{1}}\ind\right)\prod_{l=1}^{p}\xi_{l}\, dt_{p}...dt_{1}\nonumber \\
+\Re\int_{\left[-\pi,\pi\right]^{p-2}}\int\limits _{\bar{C}_{\delta}}\int\limits _{C_{\delta}}m\left(\lambda_{t_{p}}\eta_{t_{p}}P_{t_{p-1}}^{k_{p-1}}...P_{t_{1}}^{k_{1}}\ind\right)\prod_{l=1}^{p}\xi_{l}\, dt_{p}...dt_{1}.\nonumber 
\end{eqnarray}
Since $\left\Vert \eta_{t}\right\Vert \leq c\left|t\right|$, by remark
\ref{rem: Norm Boundness} we get that $\int_{C_{\delta}}\left\Vert \lambda_{t_{p}}^{k_{p}}\eta_{t_{p}}\right\Vert dt_{p}\leq\frac{C_{1}}{k_{p}}$,
and hence $\sum_{k_{p}=1}^{n}\left|\int_{C_{\delta}}\lambda_{t_{p}}^{k_{p}}\eta_{t_{p}}\right|dt_{p}\leq\log n$.
Thus, using also $\int_{C_{\delta}}\left|\lambda_{t_{p}}^{k_{p-1}}\right|dt_{p}\leq\frac{C_{1}}{\sqrt{k_{p-1}}}$\\
$\sum_{1\leq k_{1},...,k_{p}\leq n}\left|\Re\int\limits _{\left[-\pi,\pi\right]^{p-2}}\int\limits _{C_{\delta}\times C_{\delta}}m\left(\lambda_{t_{p}}^{k_{p}}\eta_{t_{p}}\lambda_{t_{p-1}}^{k_{p-1}}m\left(P_{t_{p-2}}^{k_{p-2}}...P_{t_{1}}^{k_{1}}\ind\right)\right)\prod_{l=1}^{p}\xi_{l}\, dt_{p}...dt_{1}\right|$
\begin{eqnarray*}
\\
 & \leq & \sqrt{n}\log n\sum_{1\leq k_{1},...,k_{p-2}\leq n}\left|\int\limits _{\left[-\pi,\pi\right]^{p-2}}m\left(P_{t_{p-2}}^{k_{p-2}}...P_{t_{1}}^{k_{1}}\ind\right)\prod_{l=1}^{p}\xi_{i-2}\, dt_{p}...dt_{1}\right|\\
 & \leq & C_{2}\left(\sqrt{n}^{\#\bar{J}+1}\left|x-y\right|^{\#J-2}\log n+\sqrt{n}^{p-2}\left(\log n\right)^{2}\right).
\end{eqnarray*}
Using the same method and $\int_{\left[-\pi,\pi\right]}\left\Vert P_{t}^{k}\right\Vert dt\leq\frac{C}{\sqrt{n}}$
we obtain 
\[
\sum_{1\leq k_{1},...,k_{p-2}\leq n}\left|\Re\int\limits _{\left[-\pi,\pi\right]^{p-2}}\int\limits _{C_{\delta}\times C_{\delta}}m\left(\lambda_{t_{p}}^{k_{p}}\eta_{t_{p}}\lambda_{t_{p-1}}^{k_{p-1}}\eta_{t_{p-1}}P_{t_{p-2}}^{k_{p-2}}...P_{t_{1}}^{k_{1}}\ind\right)\prod_{l=1}^{p}\xi_{l}\, dt_{p}...dt_{1}\right|\leq C\left(\sqrt{n}^{p-2}\left(\log n\right)^{2}\right).
\]
Keeping in mind that $\left\Vert P_{t}^{n}\right\Vert $ for $t\in\bar{C}_{\delta}$
and $\left\Vert N_{t}^{n}\right\Vert $ for $t\in C_{\delta}$ uniformly
tend to $0$ with an exponential rate we obtain the bound $C\left(\sqrt{n}^{p-2}\left(\log n\right)^{2}\right)$
for the third and fourth term on the righ hand side of \ref{eq: Char. Fn. Ineq. 2}
(actually we obtain a better bound, but we do not use it). 

Combination of the estimates above proves the result.

\[
\]
\[
\]

\end{proof}

\section{\label{sec:Tightness}Tightness of $l_{n}$ in $D$.}

A sequence $\left\{ X_{n}\right\} $ of random variables taking values
in a standard Borel Space $\left(X,\calB\right)$ is called tight
if for every $\epsilon>0$ there exists a compact $K\subset X$ such
that for every $n\in\bbN$,
\[
P_{n}(K)>1-\epsilon,
\]
where $P_{n}$ denotes the distribution of $X_{n}$ . By Prokhorov's
Theorem relative compactness of $t_{n}(x)$ in $D$ is equivalent
to tightness. Therefore we are interested in characterizing tightness
in $D$.

For $x\left(t\right)$ in $D_{\left[-h,h\right]},$$T\subseteq\left[-h,h\right]$
set 
\[
\omega_{x}\left(T\right)=\sup_{s,t\in T}\left|x(s)-x(t)\right|
\]
and 

\[
\omega_{x}(\delta):=\sup_{\left|s-t\right|<\delta}\left|x\left(s\right)-x\left(t\right)\right|.
\]
$\omega_{x}\left(\delta\right)$ is called the modulus of continuity
of $x$. Due to the Arzela - Ascoli theorem, it plays a central role
in characterizing tightness in the space $C\left[-h,h\right]$ of
continuous functions on $\left[-h,h\right]$, with a Borel $\sigma$-algebra
generated by the topology of uniform convergence. 

The function that plays in $D\left[-h,h\right]$ the role that the
modulus of continuity plays in $C\left[-h,h\right]$ is defined by
\[
\omega_{x}'(\delta)=\inf_{\left\{ t_{i}\right\} }\max_{1\leq i\leq v}\omega(\left[t_{i},t_{i+1})\right),
\]
where $\left\{ t_{i}\right\} $ denotes a $\delta$ sparse partition
of $[-h,h]$, i.e. $\left\{ t_{i}\right\} $ is a partition $-h=t_{1}<t_{2}<...<t_{v+1}=h$
such that ${\displaystyle \min_{1\leq i\leq v}\left|t_{i+1}-t_{i}\right|>\delta}$.
It is easy to check that if $\frac{1}{2}>\delta>0$, and $h\geq1,$
\[
\omega'_{x}(\delta)\leq\omega_{x}\left(2\delta\right).
\]
For details see \cite[Sections 12 and 13]{Bil}. The next theorem
is a characterization of tightness in the space $D$. 
\begin{thm}
\cite[Lemma 3, p.173]{Bil} (1)The sequence $l_{n}$ is tight in $D$
if and only if its restriction to $\left[-h,h\right]$ is tight in
$D_{\left[-h,h\right]}$ for every $h\in\bbR_{+}$.

(2) The sequence $l_{n}$ is tight in $D_{\left[-h,h\right]}$ if
and only if the following two conditions hold:

(i) $\forall x\in\left[-h,h\right],\;\lim\limits _{a\rightarrow\infty}\limsup\limits _{n\rightarrow\infty}m\left[\left|l{}_{n}\left(x\right)\right|\geq a\right]=0.$

(ii) $\forall\epsilon>0,\,\lim\limits _{\delta\rightarrow0}\limsup\limits _{n\rightarrow\infty}m\left[\omega_{l_{n}}^{'}\left(\delta\right)\geq\epsilon\right]=0.$ \end{thm}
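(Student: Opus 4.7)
The statement is a characterization of tightness in the Skorokhod space $D$, so the natural approach is to combine Prokhorov's theorem with the Skorokhod--Billingsley characterization of relatively compact subsets of $D[-h,h]$: a set $A\subseteq D[-h,h]$ is relatively compact if and only if $\sup_{x\in A}\|x\|_\infty<\infty$ and $\lim_{\delta\to 0}\sup_{x\in A}\omega'_x(\delta)=0$. With this characterization in hand, the whole proof becomes an exercise in producing, for each $\varepsilon>0$, an event of probability at least $1-\varepsilon$ on which $l_n$ lies in a fixed relatively compact set.

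For part (1), the ``only if'' direction is immediate from continuity of the restriction map $D\to D[-h,h]$, which sends tight families to tight families. For the converse I would use a diagonal argument. Given $\varepsilon>0$, for each integer $k\geq 1$ use tightness of $l_n$ on $[-k,k]$ to pick a compact $K_k\subseteq D[-k,k]$ with $m(l_n|_{[-k,k]}\in K_k)>1-\varepsilon/2^k$ uniformly in $n$. Intersecting gives an event of probability $\geq 1-\varepsilon$ on which $l_n$ lies in $\{x\in D: x|_{[-k,k]}\in K_k\ \forall k\}$, and this set is relatively compact in $D$ by the local nature of the Skorokhod topology on $\bbR$.

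For part (2), the ``only if'' direction comes from extracting a compact $K$ from the tight family: membership in $K$ forces both $|l_n(x)|$ to be bounded (yielding (i)) and $\omega'_{l_n}(\delta)$ to be uniformly small (yielding (ii)). The substantive direction is ``if''. The idea is to bootstrap pointwise tightness at a single reference point (say $x=0$) together with control of $\omega'$ into a uniform sup-norm bound on $[-h,h]$. Concretely: given $\varepsilon>0$, use (i) to pick $a$ with $m(|l_n(0)|\geq a)<\varepsilon/2$ uniformly in $n$; use (ii) to pick $\delta_k\downarrow 0$ with $m(\omega'_{l_n}(\delta_k)\geq 1/k)<\varepsilon/2^{k+2}$ uniformly in $n$; and intersect these events. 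On the resulting ``good'' event, $l_n$ lives in the closure of
\[
\{x\in D[-h,h]:\ |x(0)|\leq a,\ \omega'_x(\delta_k)\leq 1/k\ \forall k\geq 1\},
\]
which is compact by the relative compactness criterion above.

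The main obstacle is the bootstrap from a one-point bound to a uniform sup-norm bound in the last step: control of $\omega'$ on a $\delta$-sparse partition does not immediately yield a $\|\cdot\|_\infty$ bound, because cadlag functions may have jumps of arbitrary size between two consecutive points of the partition. One has to argue carefully, subdividing $[-h,h]$ using a partition that nearly attains $\omega'_x(\delta)$, controlling the oscillation on each piece by $\omega'_x(\delta)$, and using the infimum over partitions in the definition of $\omega'$ to ensure that jumps of size larger than the modulus can be ``absorbed'' into endpoints of a well-chosen subdivision. Once this estimate is available, Prokhorov assembles the two directions of each part cleanly.
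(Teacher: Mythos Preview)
The paper does not prove this theorem at all: it is stated as a citation of \cite[Lemma 3, p.~173]{Bil} (together with \cite[Thm.~13.2]{Bil} and its Corollary, as noted in the remark immediately following), and is used as a black box in the subsequent proof of Proposition~\ref{pro:The-sequence is tight}. So there is no ``paper's own proof'' to compare your proposal against.

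That said, your sketch is the standard route to these results and is essentially the argument in Billingsley: Prokhorov reduces tightness to relative compactness, the Skorokhod--Arzel\`a--Ascoli characterization of relatively compact sets in $D[-h,h]$ supplies the conditions in part~(2), and the localization in part~(1) is handled by a diagonal/intersection argument over $h=k\in\bbN$. Your identification of the only delicate point---bootstrapping a one-point bound plus $\omega'$-control into a uniform sup-norm bound, given that $\omega'$ does not directly control jump sizes---is exactly the lemma Billingsley isolates (his Lemma on p.~124 in the second edition: $\|x\|_\infty\le |x(-h)|+2h/\delta\cdot\omega'_x(\delta)+J(x)$, with the jump term absorbed by refining the partition). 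So your plan is correct and aligns with the source the paper cites; there is simply nothing in the paper itself to compare it to.
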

\begin{rem}
See \cite[Thm. 13.2]{Bil} and the Corollary that follows. \label{rem: Norm Boundness}Conditions
$\left(i\right)$ and $\left(ii\right)$ of the previous theorem imply
that 
\[
\lim\limits _{a\rightarrow\infty}\limsup\limits _{n\rightarrow\infty}m\left[\sup_{x\in[-m,m]}\left|l{}_{n}\left(x\right)\right|\geq a\right]=0.
\]
\end{rem}
\begin{prop}
\label{pro:The-sequence is tight}The sequence $\left\{ l_{n}\right\} _{n=1}^{\infty}$
is tight. \end{prop}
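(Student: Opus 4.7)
The plan is to invoke the tightness characterization stated just above. By part (1) it suffices to prove tightness in $D[-h,h]$ for every $h>0$, and by part (2) this reduces to verifying conditions (i) and (ii).

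For (i), I will show the stronger statement $\sup_n \mathbf{E}[l_n(x)]<\infty$; condition (i) then follows from Markov's inequality. Writing
\[
\mathbf{E}[l_n(x)] = \frac{1}{\sqrt n}\sum_{k=1}^n m\bigl(S_k=[\sqrt n\, x]\bigr)
\]
and bounding each summand by $C/\sqrt k$ via Proposition \ref{prop: Local limi prop}, the sum is at most $2C$, independently of $n$ and $x$.

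For (ii), I will apply Proposition \ref{prop:Tightness inequality} through a Billingsley-type chaining argument, using the comparison $\omega'_{l_n}(\delta)\le\omega_{l_n}(2\delta)$ recalled in the text. The crucial feature of Proposition \ref{prop:Tightness inequality} is that the exponent $\alpha$ can be chosen strictly greater than $1$, so the tail bound on increments is of Kolmogorov--Chentsov type. Partitioning $[-h,h]$ into intervals of length $\delta$ and performing a dyadic refinement down to scale $1/\sqrt n$, the restriction $|x-y|\ge 1/\sqrt n$ in Proposition \ref{prop:Tightness inequality} allows us to control each dyadic increment; summing the resulting tail probabilities gives an estimate of the form
\[
m\bigl(\omega_{l_n}(2\delta)\ge\epsilon\bigr)\le C_h\,\delta^{\alpha-1}\epsilon^{-6},
\]
which is uniform in $n$ and tends to $0$ as $\delta\to 0$. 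This is precisely (ii).

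The main obstacle is that the increment estimate in Proposition \ref{prop:Tightness inequality} is only valid for $|x-y|\ge 1/\sqrt n$, whereas the modulus of continuity naively requires control down to arbitrarily small scales. This is handled by exploiting the piecewise-constant structure of $l_n$: all jumps occur at multiples of $1/\sqrt n$, so a partition of $[-h,h]$ realising $\omega'_{l_n}(\delta)$ may be taken with endpoints lying strictly between consecutive jump points, and the oscillation of $l_n$ on each partition piece then reduces to a comparison between values at jump points separated by distances at least $1/\sqrt n$, which are in the regime where Proposition \ref{prop:Tightness inequality} applies. The requirement $\alpha>1$ is exactly what makes the geometric series arising in the chaining summable uniformly in $n$, thereby producing a modulus bound that is independent of $n$.
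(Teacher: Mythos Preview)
Your proof is correct and, for condition~(ii), follows essentially the paper's route: reduce $\omega'_{l_n}(\delta)$ to $\omega_{l_n}(2\delta)$, partition $[-h,h]$ into blocks of length $\delta$, use that $l_n$ is constant on intervals $[j/\sqrt n,(j+1)/\sqrt n)$ so that only lattice comparisons at scale $\ge 1/\sqrt n$ are needed, and then pass from the increment bound of Proposition~\ref{prop:Tightness inequality} to a maximal inequality. The paper invokes \cite[Theorem~10.2]{Bil} at this step rather than spelling out a dyadic chaining, but the content is the same and the resulting bound $C_h\delta^{\alpha-1}\epsilon^{-6}$ matches.

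Where you genuinely differ is in condition~(i). You argue via the first moment: $\mathbf{E}[l_n(x)]=n^{-1/2}\sum_{k} m(S_k=[\sqrt n\,x])\le n^{-1/2}\sum_{k=1}^n C k^{-1/2}\le 2C$ by Proposition~\ref{prop: Local limi prop}, then Markov. This is cleaner than the paper's argument, which instead uses the functional CLT for $\omega_n$ to show that for $|x|$ large $l_n(x)=0$ with high probability, and then for $|x|\le M$ bounds $m(|l_n(x)|>a)$ via $m(|l_n(M+1)|>0)+m(|l_n(x)-l_n(M+1)|>a)$ and the increment estimate. Your route avoids the detour through the invariance principle and uses only the local limit bound, at the cost of not giving any information on the support of $l_n$; the paper's route has the side benefit of showing $l_n$ is essentially supported on a compact interval, but that is not needed for tightness.
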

\begin{proof}
We prove that condition $2(i)$ holds. 

Fix $\epsilon>0,$ $x\in\RR.$ Since the Brownian Motion $\omega(t)$
satisfies 
\[
\lim_{M\to\infty}\mbox{\ensuremath{\PP}}\left(\sup_{t\in[0,1]}\left|\omega(t)\right|>M\right)=0
\]
and $\omega_{n}$ converges in distribution to $\omega$ there are
$M,n_{0}$ such that for all $n>n_{0},$

\[
m\left(\sup_{t\in[0,1]}\left|W_{n}(t)\right|>M\right)<\epsilon.
\]

By definition of $t_{n}\left(x\right),$ it follows that if $\left|x\right|>M,$
$n>n_{0},$ 
\[
m\left(\left|l_{n}\left(x\right)\right|>0\right)<\epsilon.
\]

Now, if $\left|x\right|\leq M$, by proposition \ref{pro:The-sequence is tight},

\begin{eqnarray*}
m\left(\left|l_{n}\left(x\right)\right|>a\right) & \leq & m\left(\left|l_{n}\left(M+1\right)\right|>0\right)+m\left(\left|l_{n}\left(x\right)-l_{n}\left(M+1\right)\right|>a\right)\\
 & \leq & \epsilon+\frac{4C\cdot\left(M+1\right)^{2}}{a^{4}}
\end{eqnarray*}

and the last expression can be made less then $2\epsilon$ for sufficiently
large $a$. 

To prove condition $2(ii)$ WLOG we may assume that $m\geq1$. Since
$\omega'_{x}(\delta)\leq\omega_{x}\left(2\delta\right)$, it is sufficient
to prove that the stronger condition
\begin{equation}
\forall\epsilon>0.\ \lim\limits _{\delta\rightarrow0}\limsup\limits _{n\rightarrow\infty}m\left[\sup_{x,y\in[-h.h];\left|x-y\right|<\delta}\left|l_{n}(x)-l_{n}(y)\right|\geq\epsilon\right]=0\label{eq: sufficient condition for tightness}
\end{equation}
holds. 

Let $\epsilon>0$, $1<\alpha<2$. By proposition \ref{pro:The-sequence is tight}
there exists $C>0$ such that for all $x,y:\ \frac{1}{\sqrt{n}}\leq|x-y|\leq1$
\begin{equation}
\PP^{\mu}\left(\left|l_{n}(x)-l_{n}(y)\right|>\epsilon\right)\leq\frac{C}{\epsilon^{6}}\left|x-y\right|^{\alpha}.\label{eq:sufficient condition for tightness 2}
\end{equation}
Let $\delta>0$ and $n>\delta^{-2}$, notice that $l_{n}$ is constant
on segments of the form $\left[\frac{j}{\sqrt{n}},\frac{j+1}{\sqrt{n}}\right)$,
hence 
\[
m\left(\sup_{x,y\in[-h.h];\left|x-y\right|<\delta}\left|l_{n}(x)-l_{n}(y)\right|\geq4\epsilon\right)\leq\sum\limits _{|k\delta|\leq h}m\left(\sup\limits _{k\delta\sqrt{n}\leq j\leq\left(k+1\right)\delta\sqrt{n}}\left|l_{n}\left(k\delta\right)-l_{n}\left(\frac{j}{\sqrt{n}}\right)\right|\geq\epsilon\right).
\]
By \cite[Theorem 10.2]{Bil} it follows from (\ref{eq:sufficient condition for tightness 2})
that there exists $C_{2}>0$ such that 
\begin{equation}
m\left(\sup\limits _{k\delta\sqrt{n}\leq j\leq\left(k+1\right)\delta\sqrt{n}}\left|t_{n}\left(k\delta\right)-t_{n}\left(\frac{j}{\sqrt{n}}\right)\right|\geq\epsilon\right)\leq\frac{C_{2}}{\epsilon^{6}}\delta^{\alpha}.\label{eq:Billingsley2}
\end{equation}

Therefore, 
\[
m\left(\sup_{x,y\in[-h.h];\left|x-y\right|<\delta}\left|l_{n}(x)-l_{n}(y)\right|\geq4\epsilon\right)\leq\frac{2C_{2}m}{\epsilon^{6}}\delta^{\alpha-1}\xrightarrow[\delta\to0]{}0.
\]

\end{proof}

\section{\label{Sec:Identifying the Only Possible Limit Point}Identifying
$l$ as the Limit of A Convergent Subsequence of $\left\{ l_{n}\right\} _{n\in\protect\NN}$.}
\begin{prop}
\label{pro:Identification ot the limit}Assume that the sequence $\left\{ X_{n}\right\} $
satisfies the assumptions of theorem \ref{thm:Main Theorem}. Let
$l_{n_{k}}$ be some subsequence of $l_{n}$ that converges in distribution
to some limit $q.$ Then $q\deq l$. \end{prop}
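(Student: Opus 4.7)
The plan is to identify $q$ by matching its indefinite integrals with those of $l$ via the occupation density formula for Brownian motion. The starting identity is
\[
\int_a^b l_n(x)\,dx = \nu_{\omega_n}\bigl((a,b)\bigr) + O\!\left(\tfrac{1}{\sqrt{n}}\right),
\]
uniformly for $a,b$ in any bounded set. To verify it, expand $l_n(x) = L_n(\lfloor\sqrt n x\rfloor)/\sqrt n$ and note that, up to $O(1/\sqrt n)$ boundary contributions near $a$ and $b$, the integral equals $\frac{1}{n}\#\{1\leq k\leq n : S_k/\sqrt n \in (a,b)\} = \nu_{\omega_n}((a,b))$.

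First I would establish joint tightness of $(\omega_n,l_n)$ in $D[0,1]\times D(\bbR)$: the functional CLT for Gibbs--Markov maps gives tightness of $\omega_n$, and Proposition~\ref{pro:The-sequence is tight} gives tightness of $l_n$, so the pair is tight in the product space. Along the given subsequence $n_k$, extract a further subsequence $n_{k_j}$ with
\[
(\omega_{n_{k_j}}, l_{n_{k_j}}) \overset{d}{\longrightarrow} (\tilde\omega,\tilde q),
\]
where $\tilde\omega$ is Brownian motion of variance $\sigma^2 t$ and $\tilde q \deq q$. Invoking Skorokhod's representation theorem, realize this convergence almost surely on an auxiliary probability space.

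Since $\tilde\omega$ is a.s.\ continuous, $\omega_{n_{k_j}}\to\tilde\omega$ uniformly on $[0,1]$. The Brownian occupation measure is a.s.\ absolutely continuous, so $\nu_{\tilde\omega}(\{a,b\})=0$ for a.e.\ pair $a<b$, giving
\[
\nu_{\omega_{n_{k_j}}}\bigl((a,b)\bigr) \longrightarrow \nu_{\tilde\omega}\bigl((a,b)\bigr) = \int_a^b \tilde l(x)\,dx \quad\text{a.s.},
\]
where $\tilde l$ is the Brownian local time of $\tilde\omega$ and the last equality is the occupation density formula. Independently, $l_{n_{k_j}}\to\tilde q$ in $D[-h,h]$ gives pointwise convergence at the continuity points of $\tilde q$ (hence Lebesgue-a.e.); combined with the uniform compact-set sup-norm bound supplied by Remark~\ref{rem: Norm Boundness}, dominated convergence yields $\int_a^b l_{n_{k_j}}(x)\,dx \to \int_a^b \tilde q(x)\,dx$ a.s. Feeding the starting identity into both sides,
\[
\int_a^b \tilde q(x)\,dx = \int_a^b \tilde l(x)\,dx \quad\text{a.s.}
\]
for a.e.\ $a<b$, hence for all $a<b$ by continuity in the endpoints.

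Since $\tilde l$ is continuous and $\tilde q$ is cadlag, the equality of their indefinite integrals on every interval forces $\tilde q=\tilde l$ pointwise a.s.: Lebesgue-a.e.\ equality together with right-continuity of $\tilde q$ and continuity of $\tilde l$ rules out any jump of $\tilde q$. Therefore $q\deq\tilde q=\tilde l\deq l$, as required. The main obstacle I expect is the interchange of limit and integral $\int_a^b l_{n_{k_j}}\to\int_a^b \tilde q$ along the a.s.\ Skorokhod coupling; the uniform sup-norm tightness in Remark~\ref{rem: Norm Boundness} is exactly what provides the domination needed.
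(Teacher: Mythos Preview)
Your proof is correct and hinges on the same key identity $\int_a^b l_n(x)\,dx \approx \nu_{\omega_n}((a,b))$ that the paper uses, but the technical packaging is genuinely different. The paper never invokes joint tightness or Skorokhod representation: it stays entirely at the level of distributions, showing for each finite family of intervals that the vector $\pi_g(l_n)=\bigl(\int_{a_i}^{b_i}l_n\bigr)_i$ converges in law to $\pi_g(l)$ by first proving $\pi_g(l_n)-\bigl(\nu_{\omega_n}([a_i,b_i))\bigr)_i\overset{d}{\to}0$ and then using the Skorokhod-continuity of the occupation-time functional on $D[0,1]$ (citing \cite{KS}); since the maps $\pi_g$ are continuous on $D$ and the sets $\pi_g^{-1}(\text{rectangles})$ form a $\pi$-system generating the Borel $\sigma$-algebra of $D$, this forces $\pi_g(q)\deq\pi_g(l)$ and hence $q\deq l$. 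Your route via almost-sure coupling carries a bit more machinery but is arguably more transparent, and it delivers the stronger statement that any joint subsequential limit $(\tilde\omega,\tilde q)$ satisfies $\tilde q=\text{local time of }\tilde\omega$ almost surely, not merely in law. One small correction: the error in your starting identity is not a deterministic $O(1/\sqrt n)$ but rather at most $\tfrac{2}{\sqrt n}\sup_{[a-1,b+1]}l_n$, which is only stochastically bounded; after the Skorokhod coupling this is harmless because a.s.\ convergence in $D$ already yields an a.s.\ finite sup along the subsequence, so your appeal to Remark~\ref{rem: Norm Boundness} for the domination is in fact redundant.
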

\begin{proof}
Let $G_{k}=\left\{ a_{1},b_{1},...,a_{k},b_{k}:\, a_{i}<b_{i},\: i=1,...,k\right\} $.
For $g\in G_{k}$ define the transformation $\pi_{g}:D\rightarrow\bbR^{k}$
by $\pi_{g}\left(l\right)=\left(\int_{a_{1}}^{b_{1}}l\left(x\right)\, dx,...\int_{a_{n}}^{b_{n}}l\left(x\right)\, dx\right)$.
Clearly, 
\[
\mathcal{G}=\bigcup_{k=1}^{\infty}\left\{ \pi_{g}^{-1}\left(\left[c_{1},d_{1}\right)\times...\times\left[c_{k},d_{k}\right)\right):\, g\in G_{k},\, c_{i},d_{i}\in\bbR,\, c_{i}<d_{i},\, i=1,...,n\right\} 
\]
 is a $\pi$-system, i.e. closed under finite intersections. Moreover,
$\mathcal{G}$ generates the Borel $\sigma$-algebra of $D$. It follows
that if $\pi_{g}\left(q\right)=\pi_{g}\left(l\right)$ for every $g\in\bigcup_{k=1}^{\infty}G_{k}$
then $q\overset{d}{=}l$. Hence, to prove the theorem it is enough
to show that if $l_{n_{k}}\overset{d}{\longrightarrow}q$ then $\pi_{g}\left(l_{n_{k}}\right)\overset{d}{\longrightarrow}\pi_{g}\left(l\right)$
for every $g\in\bigcup_{k=1}^{\infty}G_{k}$. $ $To this purpose
we first prove that for $g=\left[a_{1},b_{1}\right)\times...\times\left[a_{k},b_{k}\right)$,
\begin{equation}
\pi_{g}\left(l_{n}\right)-\left(\int_{0}^{1}\One_{[a_{1},b_{1})}\left(l_{n}\right)dt,...,\int_{0}^{1}\One_{[a_{k},b_{k})}\left(l_{n}\right)dt\right)\overset{d}{\longrightarrow}0.\label{eq: Convergence in dist. 1}
\end{equation}
Then we prove that 
\begin{equation}
\left(\int_{0}^{1}\One_{[a_{1},b_{1})}\left(l_{n}\right)dt,...,\int_{0}^{1}\One_{[a_{k},b_{k})}\left(l_{n}\right)dt\right)\overset{d}{\longrightarrow}\left(\int_{0}^{1}\One_{[a_{1},b_{1})}\left(l\right)dt,...,\int_{0}^{1}\One_{[a_{k},b_{k})}\left(l\right)dt\right).\label{eq: Convergence in distribution 2}
\end{equation}
\ref{eq: Convergence in dist. 1} and \ref{eq: Convergence in distribution 2}
imply that $\pi_{g}\left(l_{n_{k}}\right)\overset{d}{\longrightarrow}\pi_{g}\left(l\right)$,
thus proving the proposition (see Billingsley).

We now prove \ref{eq: Convergence in dist. 1}. By straightforward
calculations using definitions, we have 
\begin{equation}
\left|\int_{0}^{1}\One_{[a,b)}\left(\omega_{n}(t)\right)dt-\int_{a}^{b}l_{n}(x)dx\right|\leq\int\limits _{\frac{\left\lfloor \sqrt{n}a\right\rfloor }{\sqrt{n}}}^{\frac{\left\lfloor \sqrt{n}a\right\rfloor +1}{\sqrt{n}}}l_{n}\left(x\right)dx+\int\limits _{\frac{\left\lfloor \sqrt{n}b\right\rfloor }{\sqrt{n}}}^{\frac{\left\lfloor \sqrt{n}b\right\rfloor +1}{\sqrt{n}}}l_{n}\left(x\right)dx.\label{eq:Difference in densities}
\end{equation}
Now, 
\begin{eqnarray*}
m\left(\left|\int\limits _{\frac{\left\lfloor \sqrt{n}a\right\rfloor }{\sqrt{n}}}^{\frac{\left\lfloor \sqrt{n}a\right\rfloor +1}{\sqrt{n}}}l_{n}\left(x\right)dx\right|>\epsilon\right) & \leq & m\left(\sup_{x\in\left[a-1,b+1\right]}\left|l_{n}\left(x\right)\right|>M\right)\\
 & + & m\left(\left|\int\limits _{\frac{\left\lfloor \sqrt{n}a\right\rfloor }{\sqrt{n}}}^{\frac{\left\lfloor \sqrt{n}a\right\rfloor +1}{\sqrt{n}}}l_{n}\left(x\right)dx\right|>\epsilon,\sup_{x\in\left[a-1,b+1\right]}\left|l_{n}\left(x\right)\right|\leq M\right).
\end{eqnarray*}
The second summand on the right side of the above inequality tends
to $0$ since the integral is less than $\frac{M}{\sqrt{n}}$ .The
first summand is arbitrarily close to $0$ for $M,n$ large enough,
by Remark \ref{rem: Norm Boundness}. Same reasoning applied to both
summands of equation (\ref{eq:Difference in densities}) gives 
\begin{equation}
\left|\int_{0}^{1}\One_{[a,b)}\left(\omega_{n}(t)\right)dt-\int_{a}^{b}l_{n}(x)dx\right|\overset{d}{\longrightarrow}0.\label{eq: Converg. In Dist. 3}
\end{equation}
\ref{eq: Convergence in dist. 1} now follows from 
\[
m\left(\left\Vert \pi_{g}\left(l_{n}\right)-\left(\int_{0}^{1}\One_{[a_{1},b_{1})}\left(l_{n}\right)dt,...,\int_{0}^{1}\One_{[a_{k},b_{k})}\left(l_{n}\right)dt\right)\right\Vert >\epsilon\right)\leq\sum_{k=1}^{n}m\left(\left|\int_{0}^{1}\One_{[a_{k},b_{k})}\left(\omega_{n}(t)\right)dt-\int_{a_{k}}^{b_{k}}l_{n}(x)dx\right|>\frac{\epsilon}{k}\right)
\]
and \ref{eq: Converg. In Dist. 3}. 

We turn to the proof of \ref{eq: Convergence in distribution 2}.
Since $\omega_{n}\overset{d}{\longrightarrow}\omega$, it is enough
to show that the transformation $\omega\rightarrow\int_{0}^{1}\One_{[a,b)}\left(\omega\left(t\right)\right)dt$
is continuous in the Skorokhod topology on $D\left[0,1\right]$ at
almost all sample points of the Brownian motion $\omega$. This is
proved in \cite[Section 2]{KS}. \end{proof}

\end{document}